\theoremstyle{plain}
\newtheorem{theorem}{Theorem}[section]
\newtheorem{lemma}{Lemma}[section]
\newtheorem{proposition}{Proposition}[section]
\newtheorem{corollary}{Corollary}[section]
\theoremstyle{definition}
\newtheorem{definition}{Definition}[section]
\newtheorem{example}{Example}[section]
\theoremstyle{remark}
\newtheorem{remark}{Remark}[section]
\title{Subelliptic Estimates}
\author{David W. Catlin}
\address{Dept. of Mathematics, Purdue Univ., W. Lafayette IN 47906}
\email{catlin@math.purdue.edu}
\author{John P. D'Angelo}
\address{Dept. of Mathematics, Univ. of Illinois, 1409 W. Green St., Urbana IL 61801}
\email{jpda@math.uiuc.edu}
\begin{document}

\maketitle

\section{Introduction}

The purpose of this paper is to clarify some issues concerning subelliptic estimates for the ${\overline \partial}$-Neumann problem
on $(0,1)$ forms. Details of several of the results and examples here do not appear in the literature,
but versions of them have been known to the authors and a few others for a long time, and some have been mentioned
without proof such as in [DK].  Recent interest in this subject helps justify
including them. Furthermore, the situation in two complex dimensions has long been completely understood; one of the main results
there is due to Rothschild and Stein ([RS]) and hence fits nicely into this volume.

First we briefly recall the definition of subelliptic estimate and one consequence of such an estimate. 
See [BS], [C1], [C2], [C3], [DK], [K4], [K5], [KN] for considerable additional discussion. We then discuss the situation
in two complex dimensions, where things are completely understood.
We go on to describe two methods for proving such estimates, Kohn's method of subelliptic multipliers
and Catlin's method of construction of bounded plurisubharmonic functions with large Hessians.

We provide in Proposition 4.4 an example exhibiting the failure of effectiveness for Kohn's algorithm for finding subelliptic multipliers, 
and we give a simplified situation (Theorem 5.1) in which one can understand this algorithm perfectly. This section is taken from [D5].
We go on to discuss some unpublished examples of the first author. These examples provide surprising
but explicit information about how the largest possible value of the parameter $\epsilon$
that arises in a subelliptic estimate is related to the geometry of the boundary.
See Example 7.1 and Theorem 7.2.

Both authors acknowledge discussions with Joe Kohn over the years, and the second author
acknowledges support from NSF Grant DMS-07-53978.

\section{Definition of Subelliptic Estimates}

Let $\Omega$ be a pseudoconvex domain in $\mathbb{C}^{n}$ with smooth boundary, and assume that $p \in b\Omega$. 
Let $T^{1,0}b\Omega$ be the bundle whose sections are $(1,0)$ vectors tangent to $b\Omega$. We may suppose that
there is a neighborhood of $p$ on which $b\Omega$ is given by the vanishing of a smooth function $r$ with $dr(p) \ne 0$.
In coordinates, a vector field $L= \sum_{j=1}^n a_j {\partial \over \partial z_j}$ is a local section of $T^{1,0}b\Omega$ if, on $b\Omega$

$$ \sum_{j=1}^n a_j(z) r_{z_j}(z) = 0. \eqno (1) $$
Then $b\Omega$ is pseudoconvex at $p$ if, whenever (1) holds we have

$$ \sum_{j,k=1}^n r_{z_j {\overline z}_k} (p) a_j(p) {\overline {a_k(p)}} \ge 0. \eqno (2) $$

It is standard to express (2) more invariantly. The bundle $T^{1,0}(b\Omega)$ is a subbundle of $T(b\Omega) \otimes \mathbb{C}$.
The intersection of $T^{1,0}(b\Omega)$ with its complex conjugate bundle is the zero bundle, and their direct sum has fibers
of codimension one in $T(b\Omega) \otimes \mathbb{C}$.
Let $\eta$ be a non-vanishing purely imaginary 1-form that annihilates this direct sum. Then
(1) and (2) together become

$$\lambda(L, {\overline L}) = \langle \eta, [L,{\overline L}] \rangle \ge 0 \eqno (3) $$
on $b\Omega$ for all local sections of $T^{1,0}(b\Omega)$. Formula (3) defines a Hermitian form $\lambda$ on
$T^{1,0}(b\Omega)$ called the Levi form. The Levi form is defined only up to a multiple,
but this ambiguity makes no difference in what we will do. The domain $\Omega$ or its boundary 
$b\Omega$ is called {\it pseudoconvex} if the Levi form is definite everywhere on $b\Omega$; in this case, we multiply by
a constant to ensure that it is {\it nonnegative} definite.
The boundary is {\it strongly pseudoconvex} at $p$ if the Levi form is positive definite there.
Each smoothly bounded domain has an open subset of strongly pseudoconvex boundary points; the point farthest from the origin
must be strongly pseudoconvex, and strong pseudoconvexity is an {\it open condition}.

Subelliptic estimates arise from considering the ${\overline \partial}$-complex on the closed domain ${\overline \Omega}$.
As usual in complex geometry we have notions of smooth differential forms of type $(p,q)$.
We will be concerned only with the case of $(0,1)$ forms here; similar examples and results apply for forms of type $(p,q)$.

A smooth differential $(0,1)$ form $\sum_{j=1}^n \phi_j d{\overline z}^j$, defined near $p$, lies in the domain of ${\overline \partial}^*$
if the vector field $ \sum_{j=1}^n \phi_j {\partial \over \partial z_j}$ lies in $T_z^{1,0}b\Omega$ for $z$ near $p$.
The boundary condition for being in the domain of 
${\overline \partial}^*$ therefore becomes $\sum \phi_j {\partial r \over \partial z_j} = 0$ on the set where $r=0$.
Let $||\psi||$ denote the $L^2$ norm and let $||\psi||_\epsilon$ denote the Sobolev $\epsilon$ norm of $\psi$, where $\psi$ can be either a function or a differential form. The Sobolev norm involves fractional derivatives of order $\epsilon$ of the components of $\psi$.

\begin{definition} A subelliptic estimate holds on $(0,1)$ forms at $p$ if there is a neighborhood $U$ of $p$ and 
positive constants $C$ and $\epsilon$ such that (4) holds for all forms $\phi$, compactly
supported in $U$ and in the domain of ${\overline \partial}^*$.

$$ ||\phi||_\epsilon^2 \le C \left(||{\overline \partial}\phi||^2 + ||{\overline \partial}^* \phi||^2 + ||\phi||^2 \right). \eqno (4) $$
\end{definition}

In this paper we relate the largest possible value of the parameter $\epsilon$ for which (4) holds to the geometry of $b\Omega$.

Perhaps the main interest in subelliptic estimates is the fundamental local regularity theorem of Kohn and Nirenberg [KN].
In the statement of the theorem, the canonical solution to the inhomogeneous Cauchy-Riemann equation is the unique solution
orthogonal to the holomorphic functions.

\begin{theorem} Let $\Omega$ be a smoothly bounded pseudoconvex domain, and assume that there is a subelliptic estimate
at a boundary point $p$. Then there is a neighborhood $U$ of $p$ in ${\overline \Omega}$ with the following property.
Let $\alpha$ be a $(0,1)$ form with $L^2$ coefficients and ${\overline \partial}\alpha = 0$.
Let $u$ be the canonical solution to ${\overline \partial} u = \alpha$. 
Then $u$ is smooth on any open subset of $U$ on which $\alpha$ is smooth. \end{theorem}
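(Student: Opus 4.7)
The plan is to reformulate the claim in terms of the ${\overline \partial}$-Neumann operator $N$ on $(0,1)$-forms. Setting $\phi = N\alpha$, one has $u = {\overline \partial}^* \phi$, and $\phi$ satisfies $\phi, \, {\overline \partial}\phi \in \operatorname{Dom}({\overline \partial}^*)$ together with $\Box\phi := {\overline \partial}\,{\overline \partial}^*\phi + {\overline \partial}^*\,{\overline \partial}\phi = \alpha$. Smoothness of $u$ on an open set $V \subseteq U$ where $\alpha$ is smooth will follow from smoothness of $\phi$ on $V$. On $V \cap \Omega$ this is immediate, because in the interior $\Box$ agrees up to lower-order terms with the componentwise (elliptic) Laplacian, so classical interior elliptic regularity applies. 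The content of the theorem is thus the regularity of $\phi$ at boundary points of $V$.

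Near each such boundary point I would run a standard bootstrap built on the subelliptic estimate (4) and tangential pseudodifferential operators. Fix nested cutoffs $\zeta \prec \zeta'$ supported in $U$ and let $\Lambda^s_\tau$ be a Friedrichs-type tangential smoother of order $s$, constructed in a boundary-adapted coordinate chart. After correcting $\zeta \Lambda^s_\tau \phi$ by a lower-order term so that it again lies in $\operatorname{Dom}({\overline \partial}^*)$, insert it into (4) to obtain
$$\|\zeta \Lambda^s_\tau \phi\|_\epsilon^2 \le C\bigl(\|{\overline \partial}(\zeta \Lambda^s_\tau \phi)\|^2 + \|{\overline \partial}^*(\zeta \Lambda^s_\tau \phi)\|^2 + \|\zeta \Lambda^s_\tau \phi\|^2\bigr).$$
Commuting $\zeta \Lambda^s_\tau$ past ${\overline \partial}$ and ${\overline \partial}^*$ replaces the right-hand side, modulo tangential commutators of order at most $s$, by tangentially smoothed versions of ${\overline \partial}\phi$ and ${\overline \partial}^*\phi$, both of which are controlled in terms of $\alpha$ via $\Box\phi = \alpha$. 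Letting $\tau \to 0$ and iterating in increments of $\epsilon$ yields arbitrary tangential Sobolev regularity of $\phi$ on $\operatorname{supp}\zeta$. Normal derivatives are then recovered from $\Box \phi = \alpha$, which expresses the top-order normal derivative of $\phi$ in terms of tangential derivatives and $\alpha$, so that $\phi \in C^\infty$ on $V$ and hence $u = {\overline \partial}^* \phi$ as well.

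The main obstacle is the interplay between tangential smoothing and the boundary condition built into $\operatorname{Dom}({\overline \partial}^*)$: a generic tangential operator does not preserve this domain, while (4) is only available for forms that lie in it. One therefore has to choose the Friedrichs-type mollifier carefully in boundary charts and track the correction terms that restore the boundary condition, showing them to be of strictly lower order in the tangential Sobolev scale. Compounding this, because (4) gains only $\epsilon < 1$ derivatives per application, the bootstrap must be iterated many times, with careful bookkeeping at each stage to ensure that commutator and correction terms are dominated by norms already controlled; this delicate bookkeeping is the technical core of the Kohn--Nirenberg argument.
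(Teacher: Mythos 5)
The paper states this as the fundamental local regularity theorem of Kohn and Nirenberg and refers to [KN] for the proof rather than giving one; your sketch is a faithful outline of that standard argument (also presented in [FK] and [CS]): reduce to regularity of $\phi = N\alpha$ so that $u = \overline{\partial}^{*}\phi$, exploit interior ellipticity of $\Box$, and at the boundary bootstrap tangential Sobolev regularity by inserting cutoff, tangentially smoothed forms into the subelliptic estimate and recovering normal derivatives from $\Box\phi = \alpha$. The one place your description is loose is the claim that $\overline{\partial}\phi$ and $\overline{\partial}^{*}\phi$ are ``controlled in terms of $\alpha$ via $\Box\phi=\alpha$'': what the argument actually uses is the form identity $Q(\phi,\psi) - (\phi,\psi) = (\alpha,\psi)$ with $\psi = (\zeta\Lambda^{s}_{\tau})^{*}\zeta\Lambda^{s}_{\tau}\phi$, which (together with the commutator bookkeeping you rightly identify as the technical core) is how $\alpha$ enters and how the smoothed energy terms on the right of the estimate are absorbed.
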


It has been known for nearly fifty years ([K1], [K2], [FK])
that there is a subelliptic estimate with $\epsilon = {1 \over 2}$ at each strongly
pseudoconvex boundary point.  One is also interested in {\it global} regularity. See [BS] for a survey of results
on global regularity of the canonical solution. In particular,
on each smoothly bounded pseudoconvex domain, there is a smooth solution
to ${\overline \partial}u = \alpha$ when $\alpha$ is smooth and ${\overline \partial}\alpha =0$, but the canonical solution itself need not be smooth.

\section{subelliptic estimates in two dimensions}

Let $\Omega$ be a pseudoconvex domain in $\mathbb{C}^{2}$ with smooth boundary $M$, and suppose $p \in M$. The statement of Theorem 3.1 below,
resulting by combining the work of several authors,
completely explains the situation.

Assume that $r$ is a defining function for $M$ near $p$. We may choose coordinates
such that $p$ is the origin and 
$$ r(z) = 2{\rm Re}(z_2) + f(z_1,{\rm Im}(z_2)), \eqno (5) $$
where $df(0)=0$. We let ${\bf T}(M,p)$ denote the maximum order of contact of one-dimensional complex analytic curves with $M$ at $p$,
and we let ${\bf T}_{reg}(M,p)$ denote the maximum order of contact of one-dimensional regular complex analytic curves with $M$ at $p$.
We let $t(M,p)$ denote the {\it type} of $M$ at $p$, defined as follows. Let $L$ be a type $(1,0)$ vector field on $M$, with $L(p) \ne 0$.
Then ${\it type}(L,p)$ is the smallest integer $k$ such that there is an iterated bracket
$ {\mathcal L_k} = [...[L_1,L_2],...,L_k]$ for which each $L_j$ is either $L$ or ${\overline L}$ and such that

$$ \langle {\mathcal L_k}, \eta \rangle(p) \ne 0. $$
This number measures the degeneracy of the Levi form at $p$. It 
is independent of the choice of $L$, as $T_p^{1,0}M$ is one-dimensional.
We put $t(M,p) = type(L,p)$.

In two dimensions there is an equivalent method for computing $t(M,p)$.
Consider the Levi form $\lambda(L,{\overline L})$ as a function defined near $p$. We ask how many derivatives one must take 
in either the $L$ or ${\overline L}$ direction to obtain something non-zero at $p$.
Then $c(L,p)$ is defined to be two more than this minimum number of derivatives; we add two because
the Levi form already involves two derivatives. 
In two dimensions it is easy to see that ${\it type}(L,p)=c(L,p)$. This conclusion is false in higher dimensions when the
Levi form has eigenvalues of opposite signs at $p$. It is likely to be true on pseudoconvex domains; see [D1] for more information.

In $\mathbb{C}^{2}$ there are many other ways to compute the type of a point.
The easiest one involves looking at the defining function directly.
With $f$ as in (5), both of these concepts and also both versions of orders of contact mentioned above
equal the order of vanishing of the function $f(z_1,0)$ at the origin. Things are much more subtle and interesting in higher dimensions
regarding these various measurements. See [D1]. Both the geometry and the estimates are easier in $\mathbb{C}^{2}$ than
in higher dimensions; the following theorem explains fully the two dimensional case.

\begin{theorem} Let $\Omega$ be a smoothly bounded pseudoconvex domain in $\mathbb{C}^{2}$, and suppose $p \in b\Omega$. The following are equivalent:

1) There is a subelliptic estimate at $p$ with $\epsilon ={1 \over 2m}$, but for no larger value of $\epsilon$.

2) For $L$ a $(1,0)$ vector field on $b\Omega$ with $L(p) \ne 0$, we have ${\it type}(L,p) =2m$.

3) For $L$ as in 2), we have $c(L,p) = 2m$.

4) There is an even integer $2m$ such that ${\bf T}(b\Omega,p) = 2m$.

5) There is an even integer $2m$ such that ${\bf T}_{reg}(M,p) = 2m$. \end{theorem}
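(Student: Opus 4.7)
The plan is to split the proof into two logically separate parts: the geometric equivalences $(2)\Leftrightarrow(3)\Leftrightarrow(4)\Leftrightarrow(5)$, all established by reducing each invariant to the integer $\mathrm{ord}_0 f(z_1,0)$ in the normal form (5); and the analytic equivalence of these with (1), which is the deep content and rests on the combined work of Kohn, Greiner, and Rothschild--Stein.

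For the geometric part, I fix the coordinates of (5) and choose the explicit vector field $L = \frac{\partial}{\partial z_1} - \frac{f_{z_1}}{r_{z_2}}\frac{\partial}{\partial z_2}$, which lies in $T^{1,0}M$ and is nonzero at $p$. Any other section is a nonvanishing smooth multiple of this $L$, so the four geometric invariants are independent of the choice. A direct computation shows that $\lambda(L,\overline L)$ restricted to the slice $\{z_2 = 0\}\cap M$ agrees at leading order with $f_{z_1\overline z_1}(z_1,0)$, and iterated $L$-- and $\overline L$--derivatives of $\lambda(L,\overline L)$ evaluated at $p$ extract the mixed derivatives $\partial_{z_1}^a\partial_{\overline z_1}^b f(0,0)$; thus if $f(z_1,0)$ vanishes exactly to order $2m$ then $c(L,p)=2m$, which is (3). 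For $(2)=(3)$, the one-dimensionality of $T^{1,0}M$ implies that modulo the span of $L$ and $\overline L$ an iterated bracket $\mathcal{L}_k$ is a scalar $L$-- or $\overline L$--derivative of $\lambda(L,\overline L)$ times a transverse vector, so the smallest $k$ making that scalar nonzero at $p$ is exactly $c(L,p)$. This is precisely the point at which the text warns the argument fails in higher dimensions with mixed signature. For $(4)\Leftrightarrow(5)$, the regular curve $t\mapsto (t,0)$ already realizes contact $\mathrm{ord}_0 f(t,0)$ with $M$; conversely, pseudoconvexity in $\mathbb{C}^2$ makes $f(z_1,0)$ subharmonic (after absorbing pluriharmonic terms into a holomorphic change of the $z_2$--coordinate), and a Puiseux parametrization $t\mapsto (t^a,h(t))$ of any irreducible branch through $p$ then cannot beat the regular case, so $(4)=(5)=\mathrm{ord}_0 f(z_1,0)$.

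For the analytic equivalence with (1), the implication ``finite type $2m$ implies a subelliptic estimate with $\epsilon=1/(2m)$'' is Kohn's theorem via subelliptic multipliers [K1, K2]: in two dimensions his algorithm terminates in $2m$ steps and produces the estimate at exactly this exponent. Sharpness --- that no larger $\epsilon$ works, and that infinite type ($f(z_1,0)\equiv 0$, so $p$ lies on a complex curve inside $M$) rules out every subelliptic estimate --- is proved by inserting into (4) a family of $(0,1)$ forms supported in shrinking non-isotropic boxes tied to the Rothschild--Stein nilpotent group approximation of [RS], whose $2m$--homogeneity forces the scaling exponent $1/(2m)$.

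The principal obstacle is this sharpness half of (1): the Kohn multiplier chain yields the positive estimate in a fairly mechanical way, but the matching lower bound requires explicit quasi-extremal test forms whose anisotropic scaling must be dictated by the nilpotent approximation of $M$ at $p$. This is the step where genuine PDE input, rather than commutator bookkeeping, is indispensable, and it is what ultimately makes the clean two-dimensional equivalence (1)--(5) possible.
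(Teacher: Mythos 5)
Your two-part decomposition---the elementary geometric equivalences $(2)\Leftrightarrow(3)\Leftrightarrow(4)\Leftrightarrow(5)$ via the normal form and the explicit vector field $L$, followed by the deep analytic equivalence with $(1)$ handed off to the literature---matches the paper's own treatment, and your handling of the geometric step is sound: the paper calls it ``quite easy'' and your reduction of all four invariants to $\mathrm{ord}_0\,f(z_1,0)$ is exactly the intended argument.

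However, your account of the analytic part misattributes where the hard input sits and contains a concrete error. You credit the sharp positive estimate $\epsilon=\tfrac{1}{2m}$ to Kohn's subelliptic multiplier algorithm and cite [K1, K2], reserving Rothschild--Stein only for the obstruction ``no larger $\epsilon$.'' This is wrong on both counts. First, [K1] and [K2] are the strongly pseudoconvex Annals papers, not the finite type result; the $\mathbb{C}^2$ finite type theorem is [K3], and the multiplier machine is [K4]. Second, and more substantively, the multiplier algorithm of [K4] does \emph{not} ``terminate in $2m$ steps and produce the estimate at exactly this exponent'': the repeated radical steps degrade the exponent geometrically, which is precisely the effectiveness problem that Sections 4 and 5 of this very paper are devoted to (see Proposition 4.4 and the remarks before it). What the paper actually says, and what is historically correct, is that Kohn [K3] gets \emph{some} positive $\epsilon$ from finite type by direct commutator estimates, Greiner [Gr] gives the converse (infinite type kills subellipticity), and the sharp value $\epsilon=\tfrac{1}{2m}$ required Kohn to invoke the Rothschild--Stein [RS] nilpotent approximation. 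So Rothschild--Stein is the essential ingredient for the sharp \emph{positive} direction of $(1)$, not merely for the matching upper bound; your version would leave the hard implication unproved.
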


Kohn [K3] established the first subelliptic estimate for domains in $\mathbb{C}^{2}$, assuming that ${\it type}(L,p)$ was finite.
Greiner [Gr] established the converse. To establish the sharp result
that $\epsilon$ could be chosen to be the reciprocal of ${\it type}(L,p)$, Kohn invoked
results of Rothschild-Stein [RS] based on the notion of nilpotent Lie groups. These difficult results establish the equivalence of 1) and 2) above.
Also see for example [CNS] among the many references for estimates in other function spaces for solving the Cauchy-Riemann equations
in two dimensions.

The geometry in two dimensions is easy to understand;
it is quite easy to establish that condition 2) is equivalent to the other conditions from Theorem 3.1, and hence we 
listed all five conditions. In higher dimensions, however, the geometry is completely different. Nonetheless,
based on Theorem 3.1, one naturally seeks a geometric condition for subellipticity in higher dimensions.

\section{subelliptic multipliers}

We next consider the approach of Kohn from [K4] for proving subelliptic estimates.
Let ${\mathcal E}$ denote the ring of germs of smooth functions at $p$.
Recall that $||u||$ denotes the $L^2$-norm of $u$; we use this notation
whether $u$ is a function, a $1$-form, or a $2$-form. We write $||u||_\epsilon$
for the Sobolev $\epsilon$ norm. 

\begin{definition} Assume $f \in {\mathcal E}$. We say that $f$ is a subelliptic multiplier at $p$
if there are positive constants $C$ and $\epsilon$ and a neighborhood $U$ such that

$$ ||f \phi||_\epsilon^2 \le C \left( ||{\overline \partial}\phi||^2 + ||{\overline \partial}^* \phi||^2 + ||\phi||^2  \right) \eqno (6) $$
for all forms $\phi$ supported in $U$ and in the domain of ${\overline \partial}^*$. \end{definition}

\medskip
We will henceforth write $Q(\phi, \phi)$ for 
$||{\overline \partial}\phi||^2 + ||{\overline \partial}^* \phi||^2 + ||\phi||^2 $.
By Definitions 2.1 and 4.1, a subelliptic estimate holds at $p$ if and only if the constant function
$1$ is a subelliptic multiplier at $p$. We recall that when  $b\Omega$ is strongly pseudoconvex
at $p$ we can take $\epsilon = {1 \over 2}$ in (4).

The collection of subelliptic multipliers is a non-trivial ideal in ${\mathcal E}$ closed under taking radicals.
Furthermore, the defining function $r$ and the determinant of the Levi form ${\rm det}(\lambda)$
are subelliptic multipliers. We state these results of Kohn [K4]:

\medskip 
\begin{proposition}. The collection $I$
of subelliptic multipliers is a radical ideal in ${\mathcal E}$; in particular, if $f^N \in I$ for some $N$, then $ f \in I$.
Also, $r$ and ${\rm det}(\lambda)$ are in $I$.
$$||r\phi||_1^2 \le C Q(\phi,\phi) \eqno (7) $$
$$||{\rm det}(\lambda)\phi||_{1 \over 2}^2 \le C Q(\phi, \phi). \eqno (8) $$
\end{proposition}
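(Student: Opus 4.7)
The plan is to prove the four assertions in sequence: the ideal property, the radical property, and then the two explicit estimates (7) and (8).

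First, that $I$ is an ideal is essentially routine. If $f_1, f_2 \in I$ with parameters $\epsilon_1, \epsilon_2$, set $\epsilon = \min(\epsilon_1, \epsilon_2)$ and use the triangle inequality $\|(f_1+f_2)\phi\|_\epsilon^2 \le 2\|f_1\phi\|_\epsilon^2 + 2\|f_2\phi\|_\epsilon^2$. Closure under multiplication by $g \in \mathcal{E}$ follows from boundedness of multiplication by smooth compactly supported functions on $H^\epsilon$, which gives $\|gf\phi\|_\epsilon \le C_g \|f\phi\|_\epsilon$.

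Second, for the radical property it suffices to show that $f^2 \in I$ with parameter $\epsilon$ implies $f \in I$ with parameter $\epsilon/2$; iterating then handles arbitrary $N$, after first using the ideal property to pass from $f^N$ to $f^{2^k}$ for some $2^k \ge N$. The key reduction uses tangential pseudodifferential calculus: with $\Lambda^s$ the tangential operator of order $s$,
$$\|f\phi\|_{\epsilon/2}^2 = (\Lambda^\epsilon f\phi, f\phi) = (\Lambda^\epsilon f^2\phi, \phi) + ([\Lambda^\epsilon, f]\phi, f\phi).$$
Cauchy-Schwarz bounds the main term by $\|f^2\phi\|_\epsilon\|\phi\| \le C\,Q(\phi,\phi)$. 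The commutator $[\Lambda^\epsilon, f]$ has order $\epsilon - 1$, so after a small-constant/large-constant split its contribution is dominated by $\|\phi\|^2$ and absorbed into $Q(\phi, \phi)$.

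Third, (7) exploits the fact that $r$ vanishes on $b\Omega$, so $r\phi$ satisfies Dirichlet boundary conditions. The Kohn Laplacian $\Box = \bar\partial\bar\partial^* + \bar\partial^*\bar\partial$ has scalar elliptic principal symbol, so G\aa rding's inequality yields
$$\|r\phi\|_1^2 \le C\bigl(\|\bar\partial(r\phi)\|^2 + \|\bar\partial^*(r\phi)\|^2 + \|r\phi\|^2\bigr).$$
Applying the Leibniz rule, $\bar\partial(r\phi) = r\,\bar\partial\phi + (\bar\partial r)\wedge\phi$ and analogously for $\bar\partial^*$, each right-hand term is bounded by $C Q(\phi, \phi)$ since $r$ and its first derivatives are smooth and bounded.

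Fourth, (8) is the main obstacle. I would start from the Morrey-Kohn-H\"ormander basic identity
$$\sum_{j,k}\|\bar L_k \phi_j\|^2 + \int_{b\Omega}\lambda(\phi, \bar\phi)\,dS \le C Q(\phi, \phi),$$
valid for $\phi$ in the domain of $\bar\partial^*$ supported near $p$. The cofactor identity $\lambda\cdot\mathrm{adj}(\lambda) = \det(\lambda)\,I$ combined with matrix Cauchy-Schwarz for the positive semidefinite form $\lambda$ yields the pointwise inequality $(\det(\lambda))^2 |\phi|^2 \le C\,\lambda(\phi, \bar\phi)$ on $b\Omega$, since $\mathrm{adj}(\lambda)$ is smoothly bounded. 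Integration produces the boundary bound $\|\det(\lambda)\phi\|_{L^2(b\Omega)}^2 \le C Q(\phi, \phi)$. The half-derivative gain asserted in (8) is then obtained by a tangential pseudodifferential argument interpolating this boundary $L^2$ bound with the interior barred-derivative bound, using the defining condition $\sum\phi_j r_{z_j} = 0$ to convert trace information to a half-order tangential Sobolev gain. The principal technical difficulty is controlling the commutators between the tangential pseudodifferential operators and multiplication by the smooth, non-holomorphic function $\det(\lambda)$ so that exactly the asserted $\epsilon = 1/2$ is preserved and no further loss accumulates from the symbol expansion.
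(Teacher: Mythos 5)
The paper does not prove this proposition; it is stated as a result of Kohn and the reader is referred to [K4], so there is no in-paper argument to compare against. Taken on its own, your sketch gets the overall architecture right (the cofactor/matrix Cauchy--Schwarz reduction for $\det(\lambda)$ and the Dirichlet--boundary reduction for $r$ are exactly the right moves), but it is not a complete proof of the hardest parts. Two concrete gaps: first, in the radical step your identity $(\Lambda^\epsilon f\phi, f\phi) = (\Lambda^\epsilon f^2\phi, \phi) + ([\Lambda^\epsilon, f]\phi, f\phi)$ implicitly treats $f$ as real; for complex $f$ the commutation produces $|f|^2 = f\bar f$ in place of $f^2$, and $|f|^2$ is not the same germ as $f^2$, so you need an extra argument (Kohn's actual lemma is formulated in terms of a pointwise modulus comparison $|g| \le |f|$, which makes the real/complex issue disappear but which you have not stated or proved). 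Second, and more seriously, your proof of (8) stops at the boundary $L^2$ bound $\int_{b\Omega} (\det\lambda)^2|\phi|^2\,dS \le C\,Q(\phi,\phi)$; the passage from this trace-type bound together with the interior control of $\sum_{j,k}\|\bar L_k\phi_j\|^2$ to the half-derivative gain $\|\det(\lambda)\phi\|_{1/2}^2 \le C\,Q(\phi,\phi)$ is precisely the nontrivial microlocal content of Kohn's proof, and you explicitly flag it as ``the principal technical difficulty'' without carrying it out. As written, the proposal is an accurate roadmap to [K4] rather than a self-contained proof of (8) or of the radical property.
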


Kohn's algorithm starts with these two subelliptic multipliers and constructs additional ones. 
We approach the process via the concept of {\it allowable rows}. An $n$-tuple $(f_1,...,f_n)$ of germs of functions is an allowable row 
if there are positive constants $C$ and $\epsilon$ such that, for all $\phi$ as in the definition of subelliptic estimate,

$$ ||\sum_j f_j \phi_j ||_\epsilon ^2 \le C Q(\phi,\phi). \eqno (9) $$

The most important example of allowable row is, for each $j$, the $j$-th row of the Levi form, namely the $n$-tuple
$(r_{z_1 {\overline z_j}},...  ,r_{z_n {\overline z_j}})$.

The following fundamental result of Kohn enables us to pass between allowable rows and subelliptic multipliers:

\begin{proposition} Let $f$ be a subelliptic multiplier such that

$$ ||f \phi||_{2 \epsilon}^2 \le Q(\phi,\phi). \eqno (10) $$
Then the $n$-tuple of functions $({\partial f \over \partial z_1}, ..., {\partial f \over \partial z_1})$ is an allowable row, and we have:

$$ ||\sum_j {\partial f \over \partial z_j} \phi_j ||_\epsilon ^2 \le C Q(\phi,\phi). \eqno (11) $$

Conversely, consider any $n \times n$ matrix $(f_{ij})$ of allowable rows. Then ${\rm det}(f_{ij})$ is a subelliptic multiplier. \end{proposition}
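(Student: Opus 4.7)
The plan is to prove the two halves separately; the first direction (subelliptic multiplier yields allowable row) is the analytic heart, while the converse (determinant of allowable rows yields a multiplier) is essentially linear algebra via Cramer's rule.

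For the first direction, the key interior identity is
$$\sum_{j=1}^n \frac{\partial f}{\partial z_j}\phi_j \;=\; f\bar\partial^*\phi \;-\; \bar\partial^*(f\phi),$$
which follows from $\bar\partial^*\phi = -\sum_j \partial\phi_j/\partial z_j$ on compactly supported smooth $(0,1)$ forms. Writing $u = \sum_j f_{z_j}\phi_j$, the plan is to compute $\|u\|_\epsilon^2 = (\Lambda^{2\epsilon}u,u)$ for a tangential pseudodifferential operator $\Lambda^s$ of order $s$, substitute the identity, and integrate by parts. The three ingredients driving the estimates are (i) the hypothesis $\|f\phi\|_{2\epsilon}^2 \le Q(\phi,\phi)$, (ii) the trivial bound $\|\bar\partial^*\phi\|^2 \le Q(\phi,\phi)$, and (iii) the Morrey--Kohn--H\"ormander basic estimate, which on pseudoconvex domains bounds $\sum_{j,k}\|\phi_{j,\bar z_k}\|^2$ by $Q(\phi,\phi)$ for $\phi$ in the domain of $\bar\partial^*$ compactly supported near $p$. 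Tangentiality of $\Lambda^{2\epsilon}$ is what suppresses boundary contributions when $\bar\partial^*$ is integrated by parts.

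For the converse, set $g_i = \sum_k f_{ik}\phi_k$, so each allowable-row hypothesis gives $\|g_i\|_{\epsilon_i}^2 \le C\,Q(\phi,\phi)$. Cramer's rule yields the pointwise identity
$$\det(F)\,\phi_j \;=\; \sum_{i=1}^{n}\operatorname{cof}_{ij}(F)\,g_i,$$
where $\operatorname{cof}_{ij}(F)$ is a smooth function (the $(i,j)$ cofactor of $F = (f_{ij})$). Since multiplication by a smooth bounded function is bounded on $H^\epsilon$ for $\epsilon \in [0,1]$, setting $\epsilon = \min_i \epsilon_i$ and summing over $j$ yields $\|\det(F)\phi\|_\epsilon^2 \le C'\,Q(\phi,\phi)$, exhibiting $\det(F)$ as a subelliptic multiplier.

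The main obstacle lies entirely in the first direction: the careful bookkeeping of commutator and lower-order error terms. The commutators $[\bar\partial,\Lambda^{2\epsilon}]$ and $[\Lambda^{2\epsilon},f]$ are of order $2\epsilon$, producing error terms that must be absorbed into $\|u\|_\epsilon^2$ via a small-constant Cauchy--Schwarz or bounded by $Q(\phi,\phi)$. The argument cannot afford to control either $\|\phi\|_{2\epsilon}$ or $\|\bar\partial^*\phi\|_\epsilon$ directly, since the hypothesis only controls $\|f\phi\|_{2\epsilon}$; managing the interplay between the $\epsilon$ and $2\epsilon$ Sobolev scales, while keeping every error strictly lower order or absorbable, is the technical crux of the proof.
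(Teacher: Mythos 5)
The paper itself does not prove this proposition: its ``proof'' is the single line ``See [K4] or [D1],'' deferring entirely to Kohn's Acta paper and the second author's book. So your sketch should be compared against the argument in those references. Against that benchmark: you have correctly identified the central algebraic identity, namely the commutator
\[
\sum_j \frac{\partial f}{\partial z_j}\,\phi_j \;=\; [f,\bar\partial^*]\phi \;=\; f\,\bar\partial^*\phi - \bar\partial^*(f\phi),
\]
which is indeed what drives the forward direction in [K4]/[D1], and your Cramer's-rule treatment of the converse ($\det(F)\,\phi_j = \sum_i \operatorname{cof}_{ij}(F)\,g_i$, with $g_i=\sum_k f_{ik}\phi_k$ controlled by hypothesis and the cofactors acting as bounded multipliers on $H^\epsilon$) is essentially complete and matches the standard argument. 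A minor point there: multiplication by a $C^\infty$ function with bounded derivatives is bounded on $H^s$ for all $s$, not only $s\in[0,1]$, though the restriction is harmless here since $\epsilon\le 1/2$.

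The real issue is the forward direction, which you candidly leave as a plan. You should be aware that the outline as stated --- compute $(\Lambda^{2\epsilon}u,u)$ with $u=\sum_j f_{z_j}\phi_j$, substitute the identity, and integrate $\bar\partial^*$ back off as $\bar\partial$ --- tends to be \emph{circular} if carried out naively: after moving $\bar f$ and integrating by parts, the term $\bar\partial(\bar f)\cdot u$ pairs with $\phi$ to reproduce $\sum_j f_{z_j}\phi_j = u$ exactly, and one recovers $\|u\|_\epsilon^2$ with coefficient $+1$ on the right, which cannot be absorbed. Kohn's actual argument therefore requires a finer decomposition than ``substitute and integrate by parts'': one must exploit the extra structure that $Q(\phi,\phi)$ gives --- the barred derivatives $\|\phi_{j,\bar z_k}\|$ and the elliptic gain on the normal component of $\phi$ --- and handle the term $(\Lambda^{2\epsilon}(\bar f u),\bar\partial^*\phi)$ by recognizing $\bar f u = \sum_j f_{z_j}(\bar f\phi_j)$ and using that $\bar f$ is also a multiplier with the same exponent (a fact which itself requires justification, since $\bar\phi$ is not in the domain of $\bar\partial^*$). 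So while the ingredients you list (tangential $\Lambda^s$, the basic estimate, commutator control) are the right ones, the bookkeeping you defer is not merely routine: without the extra idea that breaks the circularity, the argument does not close. Also, a small slip: the commutator $[\Lambda^{2\epsilon},f]$ with a smooth function $f$ has order $2\epsilon-1$, not $2\epsilon$; only $[\bar\partial,\Lambda^{2\epsilon}]$ is genuinely of order $2\epsilon$.
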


\begin{proof} See [K4] or [D1]. \end{proof}

For domains with real analytic boundary, Kohn's process always terminates in finitely many steps, depending on only the dimension.
Following the process produces two lists of finite length; one of modules of allowable rows, the other of subelliptic multipliers.
The value of the $\epsilon$ obtained from this process depends on both the length of this list and the number of radicals
taken in each step. We will show that there is {\it no positive lower bound} on the value of $\epsilon$ in a subelliptic estimate
obtained from Kohn's process in general. In order to do so we recall some geometric
information and notation from [D1] and [D2]. 

For a real hypersurface $M$ in $\mathbb{C}^{n}$, we recall that
${\bf T}(M,p)$ denotes the maximum order of contact
of one-dimensional complex analytic varieties with $M$ at $p$. We compute this number as follows.
Let $\nu(z)$ denote the order of vanishing operator. Let $z$ be a parametrized holomorphic curve with $z(0)=p$.
We compute the ratio ${\bf T}(M,p,z) = {\nu (z^* r) \over \nu(z)}$ and call it the order of contact of the curve $z$ with $M$ at $p$.
Then ${\bf T}(M,p)$ is the supremum over $z$ of ${\bf T}(M,p,z)$. Later we will generalize this concept.

Next we consider the ring of germs of holomorphic functions ${\mathcal O}$ at $0$ in $\mathbb{C}^{n}$. Some of the ideas also apply to the formal
power series ring; at times we write $R$ or $R_n$ when the statement applies in either setting. See [Cho] for a treatment
of Kohn's algorithm in the formal power series setting.

The maximal ideal in ${\mathcal O}$ is denoted by ${\bf m}$. If $I$ is a proper ideal in ${\mathcal O}$, then the Nullstellensatz
guarantees that its variety ${\bf V}(I)$ is an isolated point if and only if the radical of $I$ equals ${\bf m}$.
In this case the intersection number ${\bf D}(I)$ plays an important role in our discussions.
We put

$$ {\bf D}(I) = {\rm dim}_{\mathbb {C}} {\mathcal O}/I. $$

For such an ideal $I$ we also consider its order of contact ${\bf T}(I)$, defined analogously to the order of contact with a hypersurface.
This number provides a slightly different measurement of the singularity than does ${\bf D}(I)$. See [D1] and [D5] for precise information.

The following proposition is a special case of results from [D2] and [K4]. It gives a simple situation
where one can relate the geometry to the estimates. Note that the geometric conditions 3) through 6)
state in various ways that there is no complex analytic curve in $b\Omega$ through $0$.

\begin{proposition} Let $\Omega$ be a pseudoconvex domain in $\mathbb{C}^{n}$ for which $0 \in b\Omega$, and there are holomorphic functions
$h_j$ such that
the defining equation near $0$ can be written as
$$r(z) = {\rm Re}(z_n) + \sum_{j=1}^N |h_j(z)|^2. \eqno (12) $$
The following are equivalent:

1) There is a subelliptic estimate on $(0,1)$ forms.

2) There is no complex analytic (one-dimensional) curve passing through $0$ and lying in $b\Omega$.

3) ${\bf T}(b\Omega, 0)$ is finite.

4) ${\bf V}(z_n, h_1,...,h_N) = \{0\}$.

5) The radical of the ideal $(z_n, h_1,...,h_N)$ is ${\bf m}$.

6) ${\bf D}(z_n, h_1,...,h_N)$ is finite.
\end{proposition}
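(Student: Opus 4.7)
The plan is to prove the equivalences by first establishing the purely algebraic chain 4) $\Leftrightarrow$ 5) $\Leftrightarrow$ 6), then identifying the geometric conditions 2) and 3) with this chain, and finally closing the loop by showing 4) $\Rightarrow$ 1) via Kohn's algorithm (Propositions 4.2--4.3) and 1) $\Rightarrow$ 2) by constructing test forms along a putative complex curve in $b\Omega$.

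For the algebraic part, the Nullstellensatz in the Noetherian local ring ${\mathcal O} = {\mathcal O}_{\mathbb{C}^n,0}$ gives 4) $\Leftrightarrow$ 5) directly, and the equivalence with 6) follows because $\sqrt{I}={\bf m}$ is the same as ${\bf m}^k \subset I$ for some $k$, which in turn is the same as $\dim_{\mathbb{C}} {\mathcal O}/I < \infty$. For 2) $\Leftrightarrow$ 4), assume $t \mapsto z(t)$ is a nonconstant holomorphic curve with $z(0)=0$ and image in $b\Omega$. Pulling back (12) gives
$$ 0 \equiv \mathrm{Re}(z_n(t)) + \sum_{j=1}^N |h_j(z(t))|^2;$$
nonnegativity of the sum makes the harmonic function $\mathrm{Re}(z_n(t))$ nonpositive with a zero at $t=0$, so the maximum principle forces it to vanish identically. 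Then $z_n \circ z$, holomorphic with identically zero real part and zero value at the origin, is identically $0$, and each $h_j \circ z \equiv 0$ follows. Hence the curve lies in ${\bf V}(z_n, h_1,\dots,h_N)$; conversely, any positive-dimensional component of this variety parametrizes a curve in $b\Omega$. The same computation yields 2) $\Leftrightarrow$ 3), since infinite order of contact of a holomorphic curve with $b\Omega$ is equivalent, in this sum-of-squares setting, to $z_n \circ z$ and all $h_j \circ z$ vanishing to arbitrarily high order, which by a Noetherian argument is equivalent to the existence of a curve actually lying in $b\Omega$.

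The substantive analytic content is 4) $\Rightarrow$ 1), and I would invoke Kohn's algorithm. The Levi form matrix factors as $(r_{z_j \bar z_k}) = J^* J$, where $J = ((h_\ell)_{z_j})$, so the Levi rows span the row space of $J^*$, placing the conjugate partials $\overline{\partial h_\ell / \partial z_j}$ as entries of allowable rows. Taking $n \times n$ determinants (Proposition 4.3) yields subelliptic multipliers polynomial in the partials of the $h_\ell$. Iterating Proposition 4.3 (multiplier $\Rightarrow$ its partials form an allowable row $\Rightarrow$ determinants of such rows are multipliers) and using the radical closure from Proposition 4.2, one shows that each $h_\ell$ belongs to the multiplier ideal $I_{\mathrm{sub}}$; the multiplier $r$ likewise places $z_n \in I_{\mathrm{sub}}$. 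Hypothesis 4) together with the Nullstellensatz then gives ${\bf m} \subset \sqrt{(z_n, h_1,\dots,h_N)} \subset I_{\mathrm{sub}}$, so one more radical step yields $1 \in I_{\mathrm{sub}}$, which is exactly the subelliptic estimate (4).

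Finally, for 1) $\Rightarrow$ 2), I would argue contrapositively: given a holomorphic curve $z(t)$ in $b\Omega$ through $0$, a family of test $(0,1)$ forms is constructed by transversely rescaling a fixed bump form along the curve. Along the curve the defining function vanishes identically and the Levi form vanishes to infinite order, so $\bar\partial$ and $\bar\partial^*$ applied to the test forms remain controlled while their Sobolev $\epsilon$-norms blow up under the rescaling, ruling out (4) for any $\epsilon > 0$. The hard step will be Step 3: extracting each holomorphic generator $h_\ell$ individually from determinants built out of its partials requires genuine use of the radical closure in Proposition 4.2, and it is precisely this step whose effective version fails in the general (non sum-of-squares) setting examined later in the paper.
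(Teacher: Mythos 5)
The paper itself gives no proof of this proposition; it is stated as ``a special case of results from [D2] and [K4],'' so there is no written proof to compare against. Your outline is the natural decomposition and several pieces are sound: the algebraic chain $4)\Leftrightarrow5)\Leftrightarrow6)$ is standard commutative algebra, and the maximum-principle pullback argument for $2)\Leftrightarrow4)$ is exactly the place where the sum-of-squares form (12) pays off. The identification $2)\Leftrightarrow3)$ does go through in this special setting, though it merits spelling out, since for a general smooth hypersurface infinite type does not by itself produce a single curve inside the boundary. The direction $1)\Rightarrow2)$ is correct in spirit and can be made painless by invoking the paper's own Theorem 6.2: a curve lying in $b\Omega$ gives (via its $t$-diameter pieces) a family with infinite order of contact, contradicting the bound $1/\epsilon$.

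The genuine gap is in $4)\Rightarrow1)$. You write that ``iterating Proposition 4.3\dots and using the radical closure from Proposition 4.2, one shows that each $h_\ell$ belongs to the multiplier ideal $I_{\mathrm{sub}}$,'' and then deduce $\mathbf{m}\subset I_{\mathrm{sub}}$ and hence $1\in I_{\mathrm{sub}}$. That middle assertion is not a consequence of Propositions 4.2 and 4.3; it is, in effect, the conclusion you are trying to reach. Two separate things are being skipped. First, passing from the Levi rows being allowable to each individual row $\bigl((h_\ell)_{z_1},\dots,(h_\ell)_{z_n}\bigr)$ being allowable is itself a nontrivial lemma in [K4] (a Cauchy--Schwarz argument exploiting the sum-of-squares structure of $\lambda=J^*J$), not a matter of ``the Levi rows span the row space of $J^*$.'' Second, and more seriously, the termination of the iteration --- that the ideal generated by determinants, radicals, and their differentials eventually reaches $(1)$ precisely when $(z_n,h_1,\dots,h_N)$ is $\mathbf{m}$-primary --- is Kohn's termination theorem, whose proof in [K4] invokes the Diederich--Fornaess result [DF1]. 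Propositions 4.2 and 4.3 tell you how to manufacture new multipliers, not that the process must exhaust the ring, and the $h_\ell$ themselves are not produced by the algorithm in any systematic way before the stabilized ideal already contains $\mathbf m$. As written the argument is circular; the correct move is simply to cite [K4] (as the paper does) for $4)\Rightarrow1)$, after which the algebraic and geometric equivalences you established close the loop.
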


Our next example is of the form (12), but it illustrates a new quantitative result.
Let $\Omega$ be a pseudoconvex domain in $\mathbb{C}^{3}$ 
whose defining equation near the origin is given by

$$ r(z) = {\rm Re}(z_3) + |z_1^M|^2 + |z_2^N + z_2 z_1^K|^2. \eqno (13) $$
We assume that $K > M \ge 2$ and $N\ge 3$. 
We note that ${\bf T}(b\Omega,0) = 2 {\rm max}(M,N)$ and that ${\bf D}(z_1^M, z_2^N + z_2 z_1^K, z_3) = MN$.
In the next result we show that Kohn's algorithm for finding subelliptic multipliers gives no lower bound
for $\epsilon$ in terms of the dimension and the type.

\begin{proposition}[Failure of effectiveness] Let $\Omega$ be a pseudoconvex domain whose boundary contains $0$,
and which is defined near $0$ by (13). Then the root taken in the radical required in the second step
of Kohn's algorithm for subelliptic multipliers
is at least $K$, and hence it is independent of the type at $0$. In particular, the procedure in [K4] gives no positive
lower bound for $\epsilon$ in terms of the type. \end{proposition}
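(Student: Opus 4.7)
The plan is to carry out the first two steps of Kohn's algorithm explicitly for the defining function (13) and to track the orders of vanishing of the multipliers produced, so that the radical exponent forced at the second step can be read off.

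At step 1 the starting multipliers are $r$ and $\det(\lambda)$ (Proposition 4.1). For a defining function of the form (12) with two holomorphic functions, $\det(\lambda)=c\,|J|^2$ where $J$ is the Jacobian of $(h_1,h_2)$ with respect to $(z_1,z_2)$; in our case
\[
J=Mz_1^{M-1}\bigl(Nz_2^{N-1}+z_1^K\bigr).
\]
Because the multiplier ideal is radical, the first radical strips the factor $z_1^{M-1}$ and produces the holomorphic multiplier
\[
g_1=z_1\bigl(Nz_2^{N-1}+z_1^K\bigr)=Nz_1z_2^{N-1}+z_1^{K+1}.
\]
The exponent at this first radical is $M-1$, which depends on $M$ and not on $K$.

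At step 2 I apply Proposition 4.2 to $g_1$: the tuple $(\partial g_1/\partial z_j)_j$ is an allowable row. Combined with the rows of the Jacobian of $(h_1,h_2)$ (allowable rows via the Levi form for defining functions of type (12)) and the row $(\partial r/\partial z_j)_j$ coming from the $\bar\partial^*$ boundary condition, these give the complete pool of allowable rows at step 2. Computing the relevant $3\times 3$ determinants produces, up to constants and units, the new multipliers $z_1^M z_2^{N-2}$ and
\[
F=N(KN-2K-2)\,z_1^K z_2^{N-1}-N^2\,z_2^{2(N-1)}-(K+1)\,z_1^{2K},
\]
alongside $g_1$ itself.

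The decisive step is to bound below the $z_1$-valuation of the multiplier ideal $I_2$ produced at step 2. Reducing modulo $g_1$ via the congruence $z_1 z_2^{N-1}\equiv -z_1^{K+1}/N\pmod{g_1}$, one computes
\[
z_1^2 F\equiv K(1-N)\,z_1^{2K+2}\pmod{g_1},
\]
so $z_1^{2K+2}\in I_2$; a parallel analysis of $g_1$ and $z_1^M z_2^{N-2}$ and their products with arbitrary germs shows that no monomial $z_1^k$ with $k\le K$ lies in $I_2$. Hence extracting $z_1$ as a multiplier requires a radical of order at least $K+1$. Since $K$ may be chosen arbitrarily large while $2\max(M,N)$ (the type at $0$) is fixed, Kohn's procedure at step 2 admits no positive lower bound on $\epsilon$ in terms of the type.

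The main obstacle is the valuation lower bound for $I_2\cap\mathbb{C}\{z_1\}$: one must verify that no combination of allowable rows, no iterated radical at step 2, and no multiplication by a germ can produce a multiplier with $z_1$-order below $K+1$. The combinatorics of the $3\times 3$ determinants available at step 2, together with the closure of multipliers under germ-multiplication and radical, is where the real content lies and is precisely the analysis worked out in [K4] and [D5].
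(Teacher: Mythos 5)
Your framework matches the paper's: after the reduction to holomorphic germs in two variables, the step-one radical produces $g_1 = z_1 g_{z_2}$ (with $g = z_2^N + z_2 z_1^K$), and the step-two ideal $J_1$ is generated by $g_1$ together with the two new $2\times 2$ determinants involving $dg_1$; your $z_1^M z_2^{N-2}$ and $F$ coincide, up to constants and notation, with the paper's $z^M g_{ww}$ and $z g_z g_{ww} - z g_w g_{zw} - g_w^2$. Your congruence $z_1^2 F \equiv K(1-N)\, z_1^{2K+2} \pmod{g_1}$ is correct but gives only an \emph{upper} bound on the power of $z_1$ lying in $J_1$; what the proposition requires is the \emph{lower} bound, i.e.\ that no small power of $z_1$ lies in $J_1$. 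This is exactly where you stop: you write that ``a parallel analysis \dots{} shows that no monomial $z_1^k$ with $k\le K$ lies in $I_2$,'' then identify this lower bound as ``the main obstacle'' and defer to ``precisely the analysis worked out in [K4] and [D5]''---that is, you assert the decisive inequality without proving it. This is a genuine gap.

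The missing step has a short direct proof which you did not find. Suppose $z_1^{K-1} = a\, z_1 g_{z_2} + b\, z_1^M g_{z_2 z_2} + c\, \bigl(z_1 g_{z_1} g_{z_2 z_2} - z_1 g_{z_2} g_{z_1 z_2} - g_{z_2}^2\bigr)$ and restrict to $z_2 = 0$. Since $N\ge 3$ and $M\ge 2$, one has $g_{z_2}(z_1,0) = z_1^K$, $g_{z_2 z_2}(z_1,0)=0$, $g_{z_1}(z_1,0)=0$, and $g_{z_1 z_2}(z_1,0)=K z_1^{K-1}$, so the right-hand side restricts to $a(z_1,0)\, z_1^{K+1} - (K+1)\,c(z_1,0)\, z_1^{2K}$, which is divisible by $z_1^{K+1}$, while the left-hand side is $z_1^{K-1}$. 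This contradiction shows $z_1^{K-1}\notin J_1$, hence the radical step requires at least a $K$-th root. Your mod-$g_1$ reduction does not substitute for this restriction to $z_2=0$: it is a different operation and does not directly control membership of powers of $z_1$ in $J_1$. One further remark: you frame the lower bound as requiring one to rule out ``iterated radical at step 2,'' but that is not what needs to be shown. The goal is to \emph{count} the radical exponent in passing from $J_1$ to $I_1 = \mathrm{rad}(J_1)$, and for that the only relevant question is membership of $z_1^k$ in the ideal $J_1$ itself, not in any further radical.
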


\begin{proof} Let $\Omega$ be a domain in $\mathbb{C}^{n+1}$ defined near the origin by (13).
By the discussion in [K4], [D1] or [D5],  Kohn's algorithm reduces to an algorithm in the ring ${\mathcal O}$ in two dimensions.
We therefore write the variables as $(z,w)$ and consider the ideal $(h)$ defined by
$(z^M, w^N + w z^K)$ in two variables. The exponents are
positive integers; we assume $K > M\ge 2$ and $N \ge 3$.  Note that
${\bf D}(h) = MN$ and ${\bf T}(h)= {\rm max}(M,N)$.
 We write $g(z,w)= w^N + w z^K$ and we use subscripts on $g$ to denote partial derivatives.

The algorithm begins with the collection ${\mathcal M}_0$ of {\it allowable rows} spanned by (14)
and the ideal $I_0$ given in (15):

$$ \begin{pmatrix} z^{M-1} & 0 \\
g_z & g_w \end{pmatrix} \eqno (14) $$
There  is only one determinant to take, and therefore 
$$ I_0 = {\rm rad} (z^{M-1} g_w) = (zg_w).\eqno (15) $$
By definition ${\mathcal M}_1$ is the union of ${\mathcal M}_0$ and 
$d(z g_w) = (zg_{wz} + g_w) dz + z g_{ww} dw$. Using the
row notation as before we see that the spanning rows of ${\mathcal M}_1$ are given by (16):
$$  \begin{pmatrix}  z^{M-1} & 0 \\
g_z & g_w \\ zg_{wz} + g_w & z g_{ww} \end{pmatrix}. \eqno (16) $$
It follows that $I_1$ is the radical of the ideal $J_1$ generated by the three possible determinants.

The ideal generated by $zg_w$ and the two new determinants is
$$ J_1 = (zg_w, z^M g_{ww}, zg_z g_{ww} - z g_w g_{zw} - g_w^2). \eqno (17) $$
It is easy to see that 
$$ I_1 = {\rm rad}(J_1) = {\bf m}. \eqno (18) $$
Thus ${\mathcal M}_2$ includes $dz$ and $dw$ and hence $I_2 = (1)$.

The crucial point concerning effectiveness involves the radical
taken in passing from $J_1$ to $I_1$. We prove that we cannot bound this root in terms of $M$ and $N$.
To verify this statement we claim that $z^{K-1}$ is not an element of $J_1$. This claim shows
that the number of roots taken must be at least $K$. Since $K$ can be chosen independently of $M$ and $N$ and also
arbitrarily large, there is no bound on the number of roots taken in terms of the dimension $2$
and the intersection number ${\bf D}(h) = MN$ or the order of contact ${\bf T}(I)={\rm max}(M,N)$.

It remains to prove the claim. If $z^{K-1} \in J_1$, then we could write

$$z^{K-1} = a(z,w) zg_w + b(z,w) z^M g_{ww} + c(z,w) (z g_z g_{ww} - z g_{zw} - g_w^2)
\eqno (19) $$
for some $a,b,c$. We note that $g_{ww}(z,0) = 0$,
that $g_w(z,0) = z^K$, and $g_{zw}(z,0) = Kz^{K-1}$. Using this information
we set $w=0$ in (19) and obtain 

$$z^{K-1} = a(z,0)z^K + b(z,0) 0 + c(z,0) (-z K z^{K-1} + 0). \eqno (20) $$
It follows from (20) that $z^{K-1}$ is divisible by $z^K$; this contradiction proves
that $z^{K-1}$ is not in $J_1$, and hence that passing to $I_1$ requires at least $K$
roots. (It is easy to show, but the information is not needed here, that taking $K$ roots suffices.) 
\end{proof}

This proposition shows that one cannot take radicals
in a controlled fashion unless one revises the algorithm. One might naturally ask whether
we can completely avoid taking radicals. The following example shows otherwise.

\begin{example} Put $n=2$, and let $h$ denote the three functions $(z^2,zw,w^2)$.
Then the three Jacobians obtained are $(z^2, 2w^2,4zw)$. If we tried to use the ideal generated by them, instead of its
radical, then the algorithm would get stuck.
We elaborate; the functions $z^2,zw,w^2$ are not known to be subelliptic multipliers at the start. 
After we compute $I_0$, however, they are known to be subelliptic multipliers and hence we are then allowed to take
the radical. This strange phenomenon (we cannot use these functions at the start, but we can use them after one step)
illustrates one of the subtleties in Kohn's algorithm. \end{example}

\section{Triangular systems}

Two computational difficulties in Kohn's algorithm are
finding determinants and determining radicals of ideals.
We describe a nontrivial class of examples for which finding 
the determinants is easy. At each stage
we require only determinants of triangular matrices. Furthermore we avoid
the computation of uncontrolled radicals; for this class of examples we never
take a root of order larger than the underlying dimension. In order to do so, we deviate from Kohn's algorithm
by treating the modules of $(1,0)$ forms differently. 

We call this class of examples {\it triangular systems}. The author introduced
a version of these examples in [D4], using the term {\it regular coordinate domains},
but the calculations there give a far from optimal value of the parameter
$\epsilon$ in a subelliptic estimate. The version in this section thus improves the work from [D4].
Catlin and Cho [CC] and independently Kranh and Zampieri [KZ] have recently established subelliptic estimates in some specific triangular systems.
The crucial point in this section is that triangular systems enable one to choose
allowable rows in Kohn's algorithm, one at a time and with control on all radicals. In Theorem 5.1 we establish
a decisive result on effectiveness for triangular systems.

\begin{definition}[Triangular Systems]
Let ${\mathcal H}$ be a collection of nonzero elements of
${\bf m} \subset R_n$. We say that ${\mathcal H}$  is a {\it triangular system of full rank} if, possibly after a linear change
of coordinates, there are elements, $h_1,...,h_n \in {\mathcal H}$ 
such that

1) For each $i$ with $1 \le i \le n$,
 we have $ {\partial h_i \over \partial z_j} = 0 $ whenever $j > i$. In other words, $h_i$ depends on only the variables
$z_1,...,z_i$.

2) For each $i$ with $1 \le i \le n$, $h_i(0,z_i) \ne 0$. Here $(0,z_i)$ is the $i$-tuple $(0,...,0,z_i)$. \end{definition}

It follows from 1) that the derivative matrix 
$ dh  = ({\partial h_i \over \partial z_j})$ for $1\le i,j \le n$
is lower triangular. (All the entries above the main diagonal vanish identically.) It follows from 2) that  
$ {\partial h_i \over \partial z_i}(0,z_i) \ne 0$. 
By combining these facts we see that $J={\rm det}(dh)$ is not identically zero. 
Our procedure makes no use of the other elements of ${\mathcal H}$.

Of course any ideal defining a zero-dimensional variety contains a triangular system of full rank.
We are assuming here additionally that the differentials of these functions define the initial module of allowable rows.

\begin{remark} Triangular systems of rank less than $n$
are useful for understanding the generalization of the algorithm where we consider
$q$ by $q$ minors. We do not consider these systems here, and henceforth we drop the phrase
{\it of full rank}, assuming that our triangular systems have full rank. \end{remark}

Let ${\mathcal H}$ be a triangular system.
After renumbering, we may assume that $h_1$ is a function of
$z_1$ alone, $h_2$ is a function of $(z_1,z_2)$, and so on. 
Note that $h_1(z_1) = z_1^{m_1} u_1(z_1)$ for a unit $u_1$, that
$h_2(z_1,z_2) = z_2 u_2(z_2) + z_1 g_2(z_1,z_2)$ for a unit $u_2$, and so on.
After changing coordinates again we may assume that these units are constant.
For example $z_1^{m_1} u_1(z_1) = \zeta_1^{m_1}$, where $\zeta_1$ is a new coordinate.
We may therefore assume that a triangular system
includes functions $h_1,...,h_n$ as follows:

$$ h_1(z) = z_1^{m_1} \eqno (21.1) $$

$$ h_2(z) = z_2^{m_2} + z_1 g_{21}(z_1,z_2) \eqno (21.2) $$

$$ h_3(z) = z_3^{m_3} + z_1 g_{31} (z_1,z_2,z_3) + z_2 g_{32} (z_1,z_2,z_3) \eqno (21.3) $$

$$ h_n(z) = z_n^{m_n} + \sum_{j=1}^{n-1} z_j g_{nj}(z_1,...,z_n). \eqno (21.n) $$
In (21) the holomorphic germs $g_{kl}$ are arbitrary. Our approach works uniformly in them
(Corollary 5.1), but the $\epsilon$ from Kohn's algorithm depends upon them.

Each $h_j$ depends upon only the first $j$ variables and has a pure monomial in $z_j$. 
A useful special case is where each $h_j$ is a
Weierstrass polynomial of degree $m_j$ in $z_j$ whose coefficients depend upon only
the first $j-1$ variables.

\begin{example} Write the variables $(z,w)$ in two dimensions. 
The pair of functions 
$$ h(z,w) = (h_1(z,w),h_2(z,w)) =(z^m, w^n + z g(z,w)), \eqno (22) $$
where $g$ is any element of $R_2$, form a triangular system. \end{example}

\begin{lemma} Let $h_1,...,h_n$ define a triangular system in $R_n$ and let $(h)$
denote the ideal generated by them. Then 

$$ {\bf D}(h) = \prod_{j=1}^n m_j. \eqno (23) $$ \end{lemma}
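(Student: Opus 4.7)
The plan is to prove (23) by induction on $n$, using Weierstrass preparation/division to peel off one variable at a time. I will write $\mathcal{O}_j$ for $\mathbb{C}\{z_1,\ldots,z_j\}$ (the same argument goes through verbatim in the formal setting $R_j$) and set $A_j = \mathcal{O}_j/(h_1,\ldots,h_j)$, so that the target is $\dim_{\mathbb{C}} A_n = \prod_{j=1}^n m_j$. The base case $n=1$ is immediate, since $h_1 = z_1^{m_1}$ makes $A_1$ the $m_1$-dimensional space spanned by $1, z_1, \ldots, z_1^{m_1-1}$.

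For the inductive step, my first task is to verify that $h_n$ is Weierstrass-regular of order exactly $m_n$ in $z_n$. This is built into the normalized form (21.n): setting $z_1 = \cdots = z_{n-1} = 0$ kills every term $z_k g_{nk}$ and leaves precisely $z_n^{m_n}$. Weierstrass preparation and division then give an isomorphism of $\mathcal{O}_{n-1}$-modules
\[
\mathcal{O}_n / (h_n) \;\cong\; \mathcal{O}_{n-1}^{\,m_n}
\]
with free basis $\{1, z_n, z_n^2, \ldots, z_n^{m_n-1}\}$. Because each $h_k$ with $k < n$ lies in the subring $\mathcal{O}_{n-1} \subset \mathcal{O}_n$, passing to the further quotient by $(h_1,\ldots,h_{n-1})$ acts on each of the $m_n$ free summands independently, so
\[
A_n \;\cong\; A_{n-1}^{\,m_n}.
\]
Taking $\mathbb{C}$-dimensions gives $\dim A_n = m_n \dim A_{n-1}$, and the induction closes to produce (23).

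I do not expect a serious obstacle. The only step needing real care is the verification that $h_n$ is genuinely Weierstrass-regular in $z_n$ — i.e., that condition 2) of Definition 5.1, as refined by the normalization leading to (21), really produces a pure $z_n^{m_n}$ upon restriction to the $z_n$-axis — but this follows by direct evaluation. An alternative, more hands-on presentation would be to exhibit the monomials $\{z_1^{a_1} \cdots z_n^{a_n} : 0 \le a_j < m_j\}$ as an explicit $\mathbb{C}$-basis of $A_n$: spanning follows by repeatedly using $h_j$ to rewrite $z_j^{m_j}$ as a sum of terms each carrying a factor $z_k$ with $k < j$, and linear independence is the content of the product decomposition above. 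Either route yields the count $\prod_{j=1}^n m_j$.
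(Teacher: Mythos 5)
Your main argument is correct but takes a genuinely different route from the paper's. The paper's proof is the one-liner you mention only as an alternative at the end: it asserts that the monomials $\{z^\alpha : 0 \le \alpha_j \le m_j - 1\}$ form a $\mathbb{C}$-basis of $R_n/(h)$ and declares this ``easily seen.'' You instead run a Weierstrass-preparation induction, establishing the module isomorphism $A_n \cong A_{n-1}^{m_n}$ and peeling off one variable at a time. The comparison is favorable to you on rigor: the spanning half of the paper's basis claim is indeed routine (reduce $z_j^{m_j}$ using $h_j$), but the linear-independence half is exactly what your direct-sum decomposition furnishes, and without some version of Weierstrass division it is not obviously ``easy.'' Your key observation that each $h_k$ with $k<n$ lies in the subring $\mathcal{O}_{n-1}$, so that quotienting $\mathcal{O}_{n-1}^{\,m_n}$ by $(h_1,\ldots,h_{n-1})$ acts diagonally on the free summands, is the correct and necessary point; it is also the hidden content behind the paper's terse assertion. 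In short, your induction is the argument the paper's phrase ``easily seen'' is silently relying on, written out in full.
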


\begin{proof} There are many possible proofs. One is to compute the vector space
dimension of $R_n/(h)$ by listing a basis of this algebra. The collection
$ \{ z^\alpha \}$ for $ 0 \le \alpha_i \le m_i - 1$ is easily seen to be a basis. 
\end{proof}

We next provide an algorithm that works uniformly over all triangular systems.
The result is a finite list of pairs of subelliptic multipliers; the length of the list 
is the multiplicity from (23). The first pair of multipliers is $(A_1,B_1)$ where both $A_1$ and $B_1$ equal the Jacobian.
The last pair is $(1,1)$. The number of pairs in the list is exactly the multiplicity
(or length) of the ideal $(h)$. The key point is that each $A_j$ is obtained from $B_j$ by taking a controlled root of some
of its factors. In other words, each $B_j$ divides a power of $A_j$, and the power never exceeds the dimension.

We remark that the proof appears at first glance to be inefficient, as delicate machinations 
within it amount to lowering an exponent by one. This inefficiency arises because the proof works 
uniformly over all choices of the $g_{ij}$ in (21).
Perhaps the proof could be rewritten as an induction on the multiplicity.

\begin{theorem} There is an effective algorithm for establishing subelliptic estimates for (domains defined by)
triangular systems. That is, let $h_1,...,h_n$ define a triangular system with $L= {\bf D}(h)= \prod m_j$. The following hold:

1) There is a finite list of pairs of subelliptic multipliers $(B_1,A_1),...., (B_L,A_L)$ such that
$B_1= A_1 = {\rm det} ({\partial h_i \over \partial z_j})$, also $B_L=A_L$, and $B_L$ is a unit.

2) Each $B_j$ divides a power of $A_j$. The power depends on only the dimension $n$ and not on the functions $h_j$. In fact, 

we never require any power larger than $n$. 

3) The length $L$ of the list equals the multiplicity ${\bf D}(h)$ given in (23).\end{theorem}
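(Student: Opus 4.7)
The plan is to build the list iteratively using the three moves at our disposal (Propositions~4.1 and~4.2): differentiate a multiplier to get an allowable row, take a determinant of $n$ allowable rows to produce a new multiplier, and take radicals in the multiplier ideal. I would start with $B_1 = A_1 = J = \det(dh) = \prod_i \partial h_i/\partial z_i$, which is a subelliptic multiplier because $dh$ is lower triangular. In the pure monomial model $h_i = z_i^{m_i}$ the leading part of $J$ is the ``top'' basis monomial $z_1^{m_1-1}\cdots z_n^{m_n-1}$ of the $L$-dimensional quotient $R_n/(h)$ from Lemma~5.1, and the target is to produce one multiplier for each of the $L$ basis monomials $z^\alpha$ (with $0 \le \alpha_i \le m_i-1$), ending at a unit.

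The generic iteration step is: given a current multiplier $A_j$ with leading monomial $z^\alpha$, form the allowable row $dA_j$, pick an index $k$, and replace the row $dh_k$ in the Jacobian matrix by $dA_j$ to obtain a new multiplier $B_{j+1}$ as the resulting determinant. A cofactor expansion using the triangular structure of $dh$ shows that $B_{j+1}$ has leading monomial proportional to $z_k^{\alpha_k-1}\prod_{i\ne k}z_i^{\alpha_i + m_i - 1}$. The natural candidate $A_{j+1}$ with leading monomial $z_k^{\alpha_k-1}\prod_{i\ne k}z_i^{\alpha_i}$ then satisfies $(A_{j+1})^s \in (B_{j+1})$ for some integer $s$, and the radical property of the multiplier ideal places $A_{j+1}$ in $I$.

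The main obstacle is choosing a traversal order of the basis monomials together with the matching replaced row $dh_k$, such that the radical exponent $s$ stays uniformly bounded by $n$. A naive greedy decrementation does not achieve this: in two dimensions with $h_1=z^m$, $h_2=w^{n'}$, trying to decrement the $w$-exponent once the $z$-exponent has dropped below $m-1$ already requires $s \ge (2m-3)/(m-2) > 2$. The remedy is to interleave decrements cleverly and, when needed, build $B_{j+1}$ from a determinant involving $dA_j$, a second previously built row $dA'_j$, and only $n-2$ of the $dh_i$; the extra freedom suffices to keep $s \le n$. I would prove the existence of such a traversal by induction on $n$: the base case $n=1$ requires no radicals (differentiation alone yields $z_1^{m_1-1},\ldots,1$), and the inductive step peels off the variable $z_n$ using $h_n$ (whose pure leading term is $z_n^{m_n}$) in $m_n$ outer blocks, each invoking the inductive hypothesis on the triangular sub-system $(h_1,\ldots,h_{n-1})$ in $R_{n-1}$ of multiplicity $L/m_n$. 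The bound $s\le n$ is structural, coming from the fact that every determinant involves exactly $n$ rows. Finally, the off-diagonal terms $g_{ij}$ in (21) contribute only lower-order perturbations to each leading-monomial computation, so the entire argument goes through uniformly over all choices of the $g_{ij}$, as required.
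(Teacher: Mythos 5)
Your overall plan matches the paper's: induction on $n$, tracking the $L$ quotient-basis monomials, decrementing one exponent at a time, and replacing naive rows $dh_i$ with differentials of previously-built multipliers to control the radical exponent. You also correctly identify the central obstruction (the naive greedy decrementation forces $s > n$), which is the real difficulty of the theorem. But the heart of the paper's proof is precisely the part you leave as a gesture, and the specific choices you sketch differ from the paper's in a way that matters.

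First, your nesting order is reversed. You put the $z_n$-exponent in the outer loop and recurse on $(h_1,\dots,h_{n-1})$ inside. The paper does the opposite: the innermost loop advances $h_n$ ($m_n$ steps), the next-innermost advances $h_{n-1}$ ($m_{n-1}m_n$ steps), and so on, with $h_1$ outermost. This is not an arbitrary choice. The paper's replaced rows are always differentials of multipliers of the form $D^{a_1}h_1\cdots D^{a_k}h_k$, which by the triangular hypothesis depend only on $z_1,\dots,z_k$; hence the matrix of allowable rows stays lower triangular and every determinant is literally a product of $n$ diagonal entries. In your nesting the $z_n$-state row is $d(D^kh_n)$, whose first $n-1$ entries are nonzero for a generic perturbation $g_{nj}$, so the determinant acquires genuine cross terms. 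Your assertion that the $g_{ij}$ ``contribute only lower-order perturbations'' is not correct as stated: for example, with $h_2 = w^{n'} + z g(z,w)$ and $g = w^a$, the off-diagonal contribution to the $2\times 2$ determinant has $w$-exponent $n'+a-2k$ versus $2n'-2k$ for the diagonal term, and dominates whenever $a<n'$. Some argument is needed to handle these terms, and the paper sidesteps the issue entirely by keeping the matrix triangular.

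Second, the justification for $s\le n$ is too thin. ``Every determinant involves exactly $n$ rows'' does not by itself bound the multiplicity of a repeated factor. The paper's argument is that, because the matrix is lower triangular, a fixed derivative $D^ah_1$ can appear as a factor in each of the $n$ diagonal entries at most once (and a derivative of $h_j$ only in the last $n-j+1$), so the determinant contains at most the $n$th power of any single factor of the candidate multiplier. That counting is what makes 2) of the theorem hold, and it is tied to the same triangular structure you give up by reversing the nesting. Finally, your remedy ``$dA_j$, a second previously built row $dA'_j$, and only $n-2$ of the $dh_i$'' undercounts: as the paper's recursion descends, eventually all $n$ rows are differentials of derived multipliers (in the terminal stages none of the original $dh_i$ survive), so the precise bookkeeping of which row to replace, and when, has to be spelled out. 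In short, the strategy is the right one and the obstruction is correctly identified, but the row-selection rule, the preservation of triangularity, and the $s\le n$ count—the three ingredients that make the theorem true—are exactly where the proposal is missing the argument.
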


\begin{proof} The proof is a complicated multiple induction.
For clarity we write out the cases $n=1$ and $n=2$ in full.

When $n=1$
we never need to take radicals. When $n=1$ we may assume $h_1(z_1)= z_1^{m_1}$. 
We set $B_1 = A_1= ({\partial \over \partial z_1})h_1$, and we set
$B_j = A_j = ({\partial \over \partial z_1})^j h_1$. Then $B_1$ is a subelliptic multiplier, and each $B_{j+1}$ is the derivative
of $B_j$ and hence also a subelliptic multiplier; it is the determinant of the one-by-one matrix given by $dB_j$. Since
$h_1$ vanishes to order $m_1$ at the origin, the function $B_{m_1}$ is a non-zero constant. Thus 1) holds.
Here $L=m_1$  and hence 3) holds. Since $B_j=A_j$ we also verify that the power used never exceeds the dimension, and hence 3) holds.
Thus the theorem holds when $n=1$.

We next write out the proof when $n=2$. The initial allowable rows are $dh_1$ and $dh_2$, giving a lower triangular two-by-two matrix,
because ${\partial h_1 \over \partial z_2} = 0$. We set 
$$ B_1 = A_1 = {\rm det} ({\partial h_i \over \partial z_j}) = Dh_1 Dh_2, $$
where we use the following convenient notation:

$$ Dh_k = {\partial h_k \over \partial z_k}. \eqno (24) $$
For $1\le j \le m_2$ we set

$$ B_j = (Dh_1)^2 \ D^j h_2 \eqno (25.1) $$
$$ A_j = Dh_1 \ D^j h_2. \eqno (25.2) $$

Each $B_{j+1}$ is a subelliptic multiplier, obtained by taking the determinant of the allowable matrix
whose first row is $dh_1$ and second row is $dA_j$. Recall that $D^{m_2}h_2$ is a unit.
When $j=m_2$ in (25.2) we therefore find that $A_{m_2}$ is a unit times $Dh_1$. The collection of multipliers is an ideal,
and hence $Dh_1$ is a subelliptic multiplier.
We may use $d(Dh_1)$ as a new allowable first row. Therefore

$$ B_{m_2+1} = D^2(h_1)  Dh_2. $$
Using $d(h_1)$ as the first row and $d(B_{m_2+1})$ as the second row, we obtain
$$ B_{m_2+2} = (D^2 h_1)^2 \ D^2h_2 $$
$$ A_{m_2+2} =  D^2 h_1 \ D^2h_2. $$
Notice again that we took only a square root of the first factor; more precisely, $A_k^2$ is divisible  by $B_k$, where $k=m_2+2$.
Thus each $A_k$ is a multiplier as well. Proceeding in this fashion we obtain
$$ A_{m_2+j} = D^2(h_1)  D^jh_2,$$
and therefore $ A_{2m_2}$ is a unit times $D^2(h_1)$. Thus $d(D^2 h_1)$ is an allowable row.
We increase the index by $m_2$ in order to differentiate $h_1$ once!
Applying this procedure a total of $m_1$ times we see that $B_{m_1 m_2}$ is a unit. 

We started with $A_1=B_1$; otherwise each $B_j$ divides $A_j^2$. Since each $B_j$ is a determinant of a matrix
of allowable rows, each $B_j$ is a subelliptic multiplier. Therefore each $A_j$ is a subelliptic multiplier, and
$A_L=B_L$ is a unit when $L=m_1m_2$. We have verified 1), 2), and 3).

We pause to repeat why we needed to take radicals of order two in the above.
After establishing that $A_j = Dh_1 D^jh_2$ is a multiplier, we use $dA_j$ as an allowable row. 
The next determinant becomes $v= (Dh_1)^2 D^{j+1}h_2$. If we use $v$ as a multiplier,
then we obtain both $Dh_1$ and $D^2 h_1$ as factors. Instead we replace $v$ with $ Dh_1 D^{j+1}h_2$ in order to avoid 
having both $Dh_1$ and $D^2 h_1$ appear.

We now describe this aspect of the process when $n=3$ before sketching the induction. For $n=3$,
we will obtain

$$ A_1= B_1 = (Dh_1) (Dh_2)(Dh_3). $$
After $m_3$ steps we will find that $A_{m_3}$ is a unit times $ Dh_1 \ Dh_2$. To compute the next determinant
we use $dh_1$ as the first row, $d(Dh_1 \ Dh_2)$ as the second row, and $dA_1$ as the third row. Each of these includes $Dh_1$
as a factor, and hence $(Dh_1)^3$ is a factor of the determinant. Hence we need to take a radical of order three.

For general $n$, each matrix of allowable rows used in this process is lower triangular, and hence each determinant taken is a product
of precisely $n$ expressions. As above, the largest number of repeated factors is precisely equal to the dimension.

Now we make the induction hypothesis: we assume that $n\ge 2$, and that $h_1,...,h_n$ defines a triangular system. We assume that
1) and 2) hold for all triangular systems in $n-1$ variables. We set

$$ B_1 = A_1 = {\rm det} ({\partial h_i \over \partial z_j}) = Dh_1 Dh_2 \cdots Dh_n. \eqno (26) $$

We replace the last allowable row by $dA_n$ and take determinants, obtaining

$$ B_2 = Dh_1 Dh_2 \cdots Dh_{n-1} \ Dh_1 Dh_2 \cdots Dh_{n-1}  D^2 h_n \eqno (27) $$
as a subelliptic multiplier. Taking a root of order two, we obtain

$$ A_2 = Dh_1 Dh_2 \cdots Dh_{n-1} D^2h_n \eqno (28) $$
as a subelliptic multiplier. Repeating this process $m_n$ times we obtain 

$$ A_{m_n} = Dh_1 Dh_2 \cdots Dh_{n-1} \eqno (29) $$
as a subelliptic multiplier. We use its differential $dA_{m_n}$ as the $n-1$-st allowable row, and use $dh_n$ as the $n$-th allowable row. 
Taking determinants shows that

$$ A_{m_n+1} = Dh_1 Dh_2 \cdots Dh_{n-2} Dh_1 Dh_2 \cdots D^2h_{n-1} Dh_n \eqno (30) $$
is a subelliptic multiplier.

What we have done? We are in the same situation as before, but we have differentiated the function $h_{n-1}$ one more time, and hence we have taken one step in decreasing the multiplicity of the singularity. We go through the same process $m_{n-1} m_n $ times and we determine that
$A_{m_n m_{n-1}}$ is a subelliptic multiplier which depends upon only the first $n-2$ variables. We then use its differential as the $n-2$-nd allowable row. We obtain, after $m_n m_{n-1} m_{n-2}$ steps, a nonzero subelliptic multiplier independent of the last three variables.
By another induction, after $\prod m_j$ steps, we obtain a unit. Each determinant is the product of $n$ diagonal elements.
At any stage of the process we can have
a fixed derivative of $h_1$ appearing as a factor to at most the first power in each of the diagonal elements. Similarly
a derivative of $Dh_2$ can occur as a factor only in the last $n-1$ diagonal elements. It follows that we never need to take
more than $n$-th root in passing from the $B_k$ (which is a determinant) to the $A_k$. After $L$ steps in all we obtain the unit
$$ D^{m_1}h_1 D^{m_2}h_2...D^{m_n}h_n = A_L = B_L$$ 
as a subelliptic multiplier.
Thus 1), 2), and 3) hold. \end{proof}

\begin{corollary} Let $\Omega$ be a domain defined near $0$ by
$$ {\rm Re}(z_{n+1}) + \sum |h_j(z)|^2, $$
where $h_j$ are as in (21). There is $\epsilon > 0$ such that the subelliptic estimate (4) holds at $0$ 
for all choices of the arbitrary function $g_{jk}$ in (21). 
\end{corollary}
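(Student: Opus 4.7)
The plan is to show that the constant function $1$ is a subelliptic multiplier at $0$; by Definitions 2.1 and 4.1 this is exactly equivalent to the subelliptic estimate (4). For defining functions of the form (12), a standard reduction (as explained in [K4], [D1], [D5] and used at the start of the proof of Proposition 4.3) identifies the initial module of allowable rows with the $R_n$-module generated by $dh_1,\ldots,dh_n$: the Levi form of $r = \mathrm{Re}(z_{n+1}) + \sum_k |h_k|^2$ is a sum of outer products of the $dh_k$, so the rows of the Levi form (which are allowable by Proposition 4.1) live inside this module, and conversely each $dh_k$ is recovered after one differentiation step via Proposition 4.3 applied to $|h_k|^2$. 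Hence we may legitimately begin Kohn's algorithm with the allowable rows $dh_1,\ldots,dh_n$ in the ring $R_n$.

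Once the reduction is in place the result is essentially immediate. Conditions (21) say exactly that $h_1,\ldots,h_n$ satisfy the two defining conditions of a triangular system in Definition 5.1. Theorem 5.1 therefore applies and produces a finite list of pairs of subelliptic multipliers $(B_1,A_1),\ldots,(B_L,A_L)$ with $L = \prod_{j=1}^n m_j$ such that $B_L$ is a unit in $\mathcal{E}$; each $B_k$ is the determinant of a matrix of allowable rows, hence a subelliptic multiplier by Proposition 4.3. Since the set of subelliptic multipliers is an ideal in $\mathcal{E}$ (Proposition 4.1) and $B_L$ is a unit, the constant function $1 = B_L^{-1}\cdot B_L$ lies in this ideal. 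This gives (4) for some positive $\epsilon$.

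The main technical obstacle is the initial reduction: verifying that, for a domain of type (12), Kohn's algorithm in the smooth category can be replaced by the purely algebraic procedure on $dh_1,\ldots,dh_n$ in $R_n$. One must keep careful track of the $\overline\partial^*$ boundary condition $\sum \phi_j r_{z_j} = 0$ and the role of the extra variable $z_{n+1}$ (which contributes the multiplier $r$ from (7) but does not enter the holomorphic geometry). Once this step is established, the inductive tower built in the proof of Theorem 5.1 runs without modification regardless of the particular $g_{jk}$ in (21). The resulting $\epsilon$ depends on $L$ and on the radicals taken (each of order at most $n$), and therefore on the exponents $m_j$ and indirectly on the $g_{jk}$, but the procedure terminates with a positive $\epsilon$ for every admissible choice.
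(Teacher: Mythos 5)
Your overall strategy is the one the paper intends: apply Theorem 5.1 to produce the chain $(B_1,A_1),\dots,(B_L,A_L)$ of subelliptic multipliers ending with the unit $B_L$, then conclude that $1$ lies in the multiplier ideal and hence (4) holds; and you rightly flag the reduction to the algebraic algorithm on $dh_1,\dots,dh_n$ as the standard step cited in [K4], [D1], [D5], exactly as the paper does at the start of the proof of Proposition 4.3.

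However, your final sentence misstates the crucial conclusion and thereby weakens what you prove. You write that the resulting $\epsilon$ depends ``indirectly on the $g_{jk}$,'' and then settle for the claim that the procedure terminates with \emph{some} positive $\epsilon$ for each admissible choice. That is the statement $\forall g_{jk}\ \exists \epsilon > 0$, which is just the finite-type result and does not need Theorem 5.1 at all. The corollary asserts the stronger, uniform statement $\exists \epsilon > 0\ \forall g_{jk}$, and this is precisely what the paper means by ``Our approach works uniformly in them.'' The uniformity is supplied by parts (2) and (3) of Theorem 5.1: the number of steps equals $L={\bf D}(h)=\prod m_j$, which depends only on the exponents $m_j$, and every radical taken has order at most $n$, a bound depending only on the dimension. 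Since the initial estimates for $r$ and $\det(\lambda)$ in Proposition 4.1 are also universal, the $\epsilon$ produced by the algorithm is a function of $n$ and $\prod m_j$ alone and does \emph{not} depend on the $g_{jk}$. Your own observation that ``the inductive tower runs without modification regardless of the particular $g_{jk}$'' already contains the right idea; you should draw the correct conclusion from it rather than inserting an unfounded ``indirect'' dependence.
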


The algorithm used in the proof of Theorem 5.1 differs from Kohn's algorithm.
At each stage we choose a single function $A$ with two properties.
Some power of $A$ is divisible by the determinant of a matrix of allowable rows,
and the differential $dA$ provides a new allowable row. The algorithm takes exactly ${\bf D}(h)$ steps.
Thus we do not consider the modules ${\mathcal M}_k$; instead we add one row at a time to the list of allowable $(1,0)$
forms. By being so explicit we avoid the uncontrolled radicals required in Proposition 4.2.

\begin{remark} The difference in this approach from [K4] can be expressed as follows. We replace the use of uncontrolled
radicals by allowing only $n$-th roots of specific multipliers. On the other hand, we must pay
by taking derivatives more often. The special case when $n=1$ clarifies the difference. \end{remark}

The multiplicity ${\bf D}(h)$ is the dimension over $\mathbb{C}$ of the quotient algebra $R/(h)$. 
This algebra plays an important role in commutative algebra, and it is worth noticing that
the process in Theorem 5.1 seems to be moving through basis elements for this algebra as it finds the $A_j$.
We note however that the multipliers $B_j$ might be in the ideal and hence $0$ in the algebra. We give a simple example.

\begin{example} Let $h(z,w)= (z^2, w^2)$. The multiplicity is $4$. We follow the proof of Theorem 5.1.
We have $(A_1,B_1) = (zw,zw)$. We have $(A_2,B_2) = (z,z^2)$. We have $(A_3,B_3) = (w,w^2)$, and finally
$(A_4,B_4) = (1,1)$. Notice that the $A_j$ give the basis for the quotient algebra, whereas two of the $B_j$
lie in the ideal $(h)$. \end{example}

To close this section we show that we cannot obtain $1$ as a subelliptic multiplier
when the initial set does not define an ${\bf m}$-primary ideal. This result indicates
why the presence of complex analytic curves in the boundary precludes subelliptic estimates on $(0,1)$ forms.
In Theorem 6.2 we state a more precise result from [C1].

\begin{proposition} Let $h_j \in {\bf m}$ for each $j$, and suppose
$(h_1,...,h_K)$ is not ${\bf m}$-primary. Then
the stabilized ideal from the algorithm is not the full ring $R_n$. \end{proposition}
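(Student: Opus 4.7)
The plan is to exhibit a nonconstant holomorphic curve germ through $0$ on which every multiplier the algorithm can ever produce must vanish; this will force the stabilized ideal into $\mathbf{m}$ and hence make it proper. Since $(h_1,\ldots,h_K)$ is not $\mathbf{m}$-primary, the Nullstellensatz tells us that the germ of the variety $\mathbf{V}(h_1,\ldots,h_K)$ has positive dimension at $0$. By the curve selection lemma, or by Puiseux parametrization of a one-dimensional branch of a positive-dimensional irreducible component, there exists a nonconstant holomorphic germ $\gamma:(\mathbb{C},0)\to(\mathbb{C}^n,0)$ with $h_j\circ\gamma\equiv 0$ for every $j$.

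I would then carry the following pair of invariants through the algorithm. Call a holomorphic multiplier $f$ \emph{admissible} if $f\circ\gamma\equiv 0$, and call an allowable row $(f_1,\ldots,f_n)$ \emph{admissible} if $\sum_j f_j(\gamma(t))\,\gamma_j'(t)\equiv 0$ as a germ in $t$. The initial multipliers $h_j$ are admissible by construction, and the initial rows $dh_j$ are admissible because differentiating $h_j\circ\gamma\equiv 0$ in $t$ gives exactly that identity. Each operation of the algorithm preserves admissibility: (i) $\mathcal{O}$-linear combinations of admissible rows are admissible, by linearity; (ii) the determinant of an $n\times n$ matrix of admissible rows is an admissible multiplier, because at any $t$ with $\gamma'(t)\ne 0$ the $n$ rows are all perpendicular to the nonzero vector $\gamma'(t)\in\mathbb{C}^n$ and so are linearly dependent, forcing the determinant to vanish on a dense open subset of $\gamma$ and therefore identically on $\gamma$ by holomorphy; (iii) radicals preserve admissibility, since $f^N\circ\gamma\equiv 0$ forces $f\circ\gamma\equiv 0$; (iv) if $f$ is an admissible multiplier, then the row $df=(\partial f/\partial z_1,\ldots,\partial f/\partial z_n)$ is an admissible row, again by differentiating $f\circ\gamma\equiv 0$ in $t$, using that we are in the holomorphic reduction of Kohn's algorithm so that only $\partial/\partial z_j$ derivatives appear.

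By induction on the steps of the algorithm, every multiplier in the stabilized ideal is admissible, hence vanishes at $\gamma(0)=0$, so the stabilized ideal lies in $\mathbf{m}$ and is not equal to $R_n$. The two points of greatest delicacy are the extraction of $\gamma$, where one must pick a genuinely nonconstant parametrization of a positive-dimensional branch, and step (ii), where one needs $\gamma'$ to be generically nonzero; the latter is automatic for any nonconstant holomorphic germ, after which analytic continuation closes the argument. I expect neither to pose serious difficulty once the admissibility invariants have been set up.
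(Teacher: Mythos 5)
Your proof is correct and takes essentially the same approach as the paper's: both extract a nonconstant curve germ through the origin in the variety of the ideal and then show, step by step, that all allowable rows annihilate the curve's velocity vector, forcing every determinant (and hence every multiplier generated by the algorithm) to vanish along the curve. Your packaging of this invariant as ``admissibility'' is a tidy reorganization of the same argument.
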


\begin{proof} Since the (analytic or formal) variety defined by the $h_j$ is positive
dimensional, we can find a (convergent or formal) nonconstant
$n$-tuple of power series in one
variable $t$, written $z(t)$,  such that $h_j(z(t))= 0$ in $R_1$ for all $j$.
Differentiating yields

$$ \sum {\partial h_j \over \partial z_k}(z(t)) z_k'(t) = 0.  \eqno (31)$$

Hence the matrix ${\partial h_j \over \partial z_k}$ has a nontrivial kernel, and so
each of its $n$ by $n$ minor determinants $J$ vanishes after substitution of $z(t)$.
Since $J(z(t)) = 0$, 
$$ \sum {\partial J \over \partial z_k}(z(t)) z_k'(t) = 0.  \eqno (32) $$
Hence including the 1-form $dJ$ does not change the collection of vectors annihilated
by a matrix of allowable rows. Continuing we see that
$z'(t)$ lies in the kernel of all new matrices we form from allowable rows, and hence $g(z(t))$ vanishes
for all functions $g$ in the stabilized ideal. Since $z(t)$ is not constant, we conclude
that the variety of the stabilized ideal is positive dimensional,
and hence the stabilized ideal is not $R_n$. \end{proof}

\section{necessary and sufficient conditions for subellipticity}

In the previous sections we have seen a sufficient condition for subellipticity. A subelliptic estimate
holds if and only if the function $1$ is a subelliptic multiplier; there is an algorithmic procedure
to construct subelliptic multipliers beginning with the defining function and the determinant of the Levi form.
Each step of the process decreases the value of $\epsilon$ known to work in (4). If, however, the process terminates
in finitely many steps, then (4) holds for some positive $\epsilon$. Using an important geometric result from [DF],
Kohn [K4] established that the process must terminate when the boundary is real-analytic, and that $1$ is a subelliptic multiplier
if and only if there is no complex variety of positive dimension passing through $p$ and lying in the boundary.

In this section we recall from [C1], [C2], [C3] a different approach to these estimates.
The sufficient condition for an estimate involves the existence of plurisubharmonic functions
with certain properties. Such functions can be used as weight functions in proving $L^2$ estimates. See also [He].
A related approach to the estimates that works on Lipschitz domains appears in [S].

We wish to relate the estimate (4) to the geometry of the boundary.
Let $r$ be a smooth local defining function of a pseudoconvex domain $\Omega$, and assume $0 \in b\Omega$. 
We consider families $\{M_t\}$ of holomorphic curves through $p$
and how these curves contact $b\Omega$ there.  For $ t>0$ we consider nonsingular holomorphic curves $g_t$ as follows:

1) $g_t: \{|\zeta| < t\} \to \mathbb{C}^{n}$ and $g_t(0)=0$.

2) There is a positive constant $c_2$ (independent of $t$) such that, on $\{|\zeta| < 1\}$, we have
$ |g_t'(\zeta)| \le c_2$. 

3) There is a positive constant $c_1$ such that $ c_1 \le |g_t'(0)|$.

We say that the {\it order of contact} of the family $\{M_t\}$ (of holomorphic curves parametrized by $g_t$) 
with $b \Omega$ is $\eta_0$ if $\eta_0$ is the supremum of the set
of real numbers $\eta$ for which

$$ {\rm sup}_\zeta |r(g_t(\zeta))| \le C t^\eta. \eqno (33) $$

The holomorphic curves $g_t$ considered in this definition are all nonsingular. Therefore
this approach differs somewhat from the approach in
[D1] and [D2], where allowing germs of curves with singularities at $0$ is crucial. Our next example provides some insight.

\begin{example} 1) Define $r$ as follows:
$$ r(z) = {\rm Re}(z_3) + |z_1^2 - z_2 z_3|^2 + |z_2|^4. \eqno (34) $$
By [D1] we have ${\bf T}(b\Omega, 0) = 4$. Each curve $\zeta \to g(\zeta)$ whose third component vanishes has contact $4$ at the origin.
On the other hand, consider a nearby boundary point of the form $(0,0,ia)$ for $a$ real.
Then the curve
$$ \zeta \to (\zeta, {\zeta^2 \over ia}, ia)= g_a(\zeta) \eqno (35) $$
has order of contact $8$ at $(0,0,ia)$. By [D2] this jump is the maximum possible; see (39) below for the sharp inequality in general.

2) Following [C1] we jazz up this example by considering 

$$ r(z) = {\rm Re}(z_3) + |z_1^2 - z_2 z_3^l|^2 + |z_2|^4 + |z_1 z_3^m|^2. \eqno (36) $$
for positive integers $l,m$ with $2 \le l \le m$. Again we have ${\bf T}(b\Omega, 0) = 4$. We will construct a family of regular holomorphic curves
$g_t$ with order of contact ${4(2m+l) \over m+2l}$. For $|\zeta| < t$, and $\alpha$ to be chosen, put 

$$ g_t(\zeta) = (\zeta, {\zeta^2 \over (it^\alpha)^l}, i t^\alpha). \eqno (37) $$
Then, pulling back $r$ to $g_t$ we obtain

$$ r(g_t(\zeta)) = {|\zeta|^8 \over |t|^{4 \alpha l}} + |\zeta|^2 |t|^{2 \alpha m}. \eqno (38) $$
Setting the two terms in (38) equal, we obtain
$ |\zeta|^6 = |t|^{4\alpha l + 2 \alpha m}$.
Put $\alpha = {3 \over m + 2 l}$ and then we get $|\zeta|=|t|$.
It follows that
$$ {\rm sup}_\zeta |r(g_t(\zeta))| = 2|t|^\eta, $$
where $\eta = {4 (2m+l) \over m+2l}$. Hence the order of contact of this family is at least $\eta$; in fact it is precisely this value.
Furthermore, by Theorems 6.1 and 6.2 below, there is a subelliptic estimate at $0$ for $\epsilon = {1 \over \eta}$ and this value is
the largest possible. Depending on $l$ and $m$,
the possible values of the upper bound on $\epsilon$ live in the interval $[{1 \over 8}, {1 \over 4}]$.
\end{example}

We next recall the equivalence of subelliptic estimates on $(0,1)$ forms with finite type. 

\begin{theorem} See [C2] and [C3]. Suppose that $b\Omega$ is smooth and pseudoconvex and ${\bf T}(b\Omega,p_0)$ is finite.
Then the subelliptic estimate (4) holds for some $\epsilon > 0$. 
 \end{theorem}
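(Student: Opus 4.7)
The plan is to follow Catlin's weight-function approach from [C2], [C3]. The goal is to reduce the subelliptic estimate (4) at $p_0$ to the construction, for each small $\delta > 0$, of a smooth plurisubharmonic function $\lambda_\delta$ on a fixed neighborhood $U$ of $p_0$ in $\overline{\Omega}$ that is bounded in absolute value by $1$ and whose complex Hessian satisfies
$$ \sum_{j,k=1}^n \frac{\partial^2 \lambda_\delta}{\partial z_j \partial \bar z_k}(z) w_j \overline{w_k} \ge c \, \delta^{-2\epsilon} |w|^2 $$
for all $z$ in the strip $S_\delta = \{-\delta < r(z) < 0\} \cap U$ and all $w \in \mathbb{C}^n$, with $c$ and $\epsilon > 0$ independent of $\delta$. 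Granted such a family, the subelliptic estimate at $p_0$ follows from the weighted Kohn--Morrey--H\"ormander identity together with a Littlewood--Paley decomposition in the tangential frequency variable.

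The analytical half is the easier of the two. I would decompose $\phi = \sum_k \phi_k$ with $\phi_k$ supported in the dyadic tangential frequency shell $|\xi'| \sim 2^k$, and for each $k$ apply Kohn's basic identity on $(0,1)$ forms with the weight $e^{-t \lambda_{\delta_k}}$ at $\delta_k = 2^{-k}$ and $t$ large but fixed. The Hessian lower bound contributes a term at least $c \, 2^{2\epsilon k} \|\phi_k\|^2$, which after summing over $k$ yields the tangential Sobolev $\epsilon$-norm of $\phi$. The normal direction is then handled by elliptic regularity inside $\Omega$ combined with the boundary condition defining the domain of ${\overline \partial}^*$, giving (4).

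The real work lies in constructing $\lambda_\delta$. Under the hypothesis ${\bf T}(b\Omega, p_0) < \infty$, I would first stratify $b\Omega$ near $p_0$ by Catlin's multitype, a finite $n$-tuple of rational invariants bounded by the D'Angelo type. On each stratum one derives a quantitative estimate for the rate of degeneracy of the Levi form in specific complex tangential directions, encoded in exponents $\mu_1, \ldots, \mu_n$. For each point $q$ in the strip $S_\delta$, one selects a polydisc $P_\delta(q)$ with radii $\delta^{1/\mu_j}$ in special coordinates around $q$ and writes down a local bounded plurisubharmonic function $\psi_{\delta,q}$ whose Hessian has eigenvalue at least $c' \delta^{-2\epsilon}$ in every complex direction, with $\epsilon = 1/(2 \max_j \mu_j)$. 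One then forms $\lambda_\delta$ by patching these local pieces via a partition of unity adapted to the stratification, combined with a small multiple of a standard bounded strictly plurisubharmonic exhaustion function to preserve global plurisubharmonicity.

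The principal obstacle is precisely this patching across strata, together with proving that the multitype is finite whenever ${\bf T}(b\Omega, p_0)$ is. The subtlety, foreshadowed in Example 6.1(2), is that ${\bf T}$ allows singular curves while the Hessian bound on a bounded weight is naturally a statement about families of regular curves; controlling the gap between these notions, and ensuring that a single exponent $\epsilon > 0$ survives as one moves between strata of different multitype, is the heart of [C2]. Once the multitype stratification and the pointwise polydisc construction are in place, the remainder of the argument is a systematic but essentially mechanical assembly.
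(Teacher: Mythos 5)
Your proposal accurately reproduces the two-stage strategy of [C2], [C3], which is exactly the plan the paper indicates: first reduce the estimate to producing, for each $\delta>0$, a uniformly bounded plurisubharmonic weight on the shell $\{-\delta<r\le 0\}$ whose complex Hessian is bounded below by $c\,\delta^{-2\epsilon}$ (this reduction is precisely Theorem 7.1 in the paper), and then construct the weights by stratifying the boundary via the multitype and patching local polydisc-scale pieces. The paper cites [C2], [C3] rather than reproducing the construction, and your sketch matches that route, correctly flagging the finiteness of the multitype and the cross-stratum patching as the genuinely hard steps.
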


\begin{theorem} See [C1]. Suppose that the subelliptic estimate (4) holds for
some positive $\epsilon$. If $\{M_t\}$ is a family of complex-analytic curves of diameter $t$, then
the order of contact of $\{M_t\}$ with $b \Omega$ is at most ${1 \over \epsilon}$.
 \end{theorem}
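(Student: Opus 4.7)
The plan is to argue by contradiction: assume (4) holds at $p$ with exponent $\epsilon>0$ and suppose there is a family $\{M_t\}$ of nonsingular holomorphic curves $g_t$ satisfying 1)--3) whose order of contact with $b\Omega$ exceeds $1/\epsilon$. Fix $\eta$ with $1/\epsilon<\eta<\eta_0$, so that $\sup_{|\zeta|<t}|r(g_t(\zeta))|\le Ct^\eta$ for all sufficiently small $t$. After translating to $p=0$ and a unitary change of $(1,0)$ tangent coordinates, assume $r(z)=2\operatorname{Re} z_n+\text{l.o.t.}$ and that the unit vector $g_t'(0)/|g_t'(0)|$ points essentially along $\partial/\partial z_1$; condition 3) makes this nondegenerate. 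A Taylor expansion together with the bound on $|r\circ g_t|$ places the image $M_t$ inside the anisotropic box
$$B_t=\{|z_1|\le Ct,\ |z_j|\le Ct^\eta\text{ for }2\le j\le n-1,\ |\operatorname{Re} z_n|\le Ct^\eta,\ |\operatorname{Im} z_n|\le Ct\}.$$

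The next step is to manufacture test forms adapted to $B_t$. Pick a smooth cutoff $\chi_t$ supported on $B_t$ and equal to $1$ on a comparable smaller box, and set
$$\phi_t(z)=e^{iz_1/t}\,\chi_t(z)\,\omega_t(z),$$
where $\omega_t$ is a smooth $(0,1)$ form whose coefficients satisfy $\sum_j(\omega_t)_j r_{z_j}=0$ on $b\Omega$, obtained, say, by starting from $d{\overline z}_k$ and subtracting its ${\overline \partial} r$-component. Compactness of support in a fixed neighborhood $U$ of $p$ is automatic for small $t$, and the boundary condition places $\phi_t$ in the domain of ${\overline \partial}^*$. The oscillation $e^{iz_1/t}$ pumps tangential frequency $1/t$ into $\phi_t$, giving $\|\phi_t\|_\epsilon^2\gtrsim t^{-2\epsilon}\|\phi_t\|^2$. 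Since $e^{iz_1/t}$ is holomorphic in $z_1$, all contributions to ${\overline \partial}\phi_t$ and ${\overline \partial}^*\phi_t$ come from derivatives of $\chi_t$ and $\omega_t$ together with the boundary correction in $\omega_t$; tangential $z_1$-derivatives cost $t^{-1}$ only on a set of small measure, while derivatives in the other directions cost $t^{-\eta}$. Balancing these against the volume of $B_t$ should yield $Q(\phi_t,\phi_t)\lesssim t^{-2/\eta}\|\phi_t\|^2$ plus terms absorbed by the $\|\phi_t\|^2$ summand on the right of (4). Inserting into (4) gives $t^{-2\epsilon}\lesssim t^{-2/\eta}$ for arbitrarily small $t$, hence $\epsilon\le 1/\eta$, contradicting $\eta>1/\epsilon$.

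The main obstacle is turning the derivative bookkeeping in the previous paragraph into an honest inequality $Q(\phi_t,\phi_t)\lesssim t^{-2/\eta}\|\phi_t\|^2$ with the correct exponent. One has to verify that the ${\overline \partial}^*$-boundary correction added to $\omega_t$, which is of size $r$ on $b\Omega$ and hence $O(t^\eta)$ on $M_t$, produces an acceptable cancellation when its worst derivatives fall on it, and one must also control cross terms from mixing the $z_1$-tangential and normal directions under the anisotropic scaling. This is precisely the delicate Fourier-analytic construction Catlin carries out in [C1], using a plurisubharmonic weight in place of the naive bump $e^{iz_1/t}\chi_t$, and a rigorous execution of the plan would need to follow that route; the role of the weight is to concentrate the mass of $\phi_t$ where $|r|$ is as small as the curve allows, so that the $t^\eta$ gained from $|r\circ g_t|\le Ct^\eta$ is actually realized in the estimate of $Q$.
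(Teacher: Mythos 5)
The paper does not prove Theorem 6.2; the text is simply a citation to [C1], so your proposal is not being compared against an internal argument but against Catlin's original one. Your outline captures the right high-level idea — localize an oscillating test form along the disks $g_t$, let the phase drive $\|\phi_t\|_\epsilon^2$ up by $t^{-2\epsilon}$, and use the contact hypothesis (33) to cap $Q(\phi_t,\phi_t)$ — but the specific ansatz $\phi_t=e^{iz_1/t}\chi_t\omega_t$ does not deliver the decisive inequality $Q(\phi_t,\phi_t)\lesssim t^{-2/\eta}\|\phi_t\|^2$, and your final paragraph concedes this.

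Two concrete breakdowns. First, $\overline\partial^*$ acts on a $(0,1)$ form by $-\sum_j\partial_{z_j}\phi_j$ (plus lower-order terms), so $\partial_{z_1}$ hits the $z_1$-holomorphic phase whenever the $d\overline z_1$-component of $\phi_t$ is nonzero, producing $\|\overline\partial^*\phi_t\|\sim t^{-1}\|\phi_t\|$ and hence $Q\gtrsim t^{-2}\|\phi_t\|^2$; plugging into (4) then gives only $\epsilon\le 1$, far weaker than $\epsilon\le 1/\eta$ for $\eta>1$. Avoiding this requires either that the $d\overline z_1$-component vanish (and then one must re-examine where the $t^{-2\epsilon}$ gain in $\|\phi_t\|_\epsilon$ is actually coming from) or a built-in cancellation in $\omega_t$, and neither is established. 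Second, the anisotropic box $B_t$ assigns width $t^\eta$ to the intermediate tangential directions $z_2,\dots,z_{n-1}$, but hypotheses 1)--3) on $g_t$ only bound $|g_t'|$ uniformly; they do not force those components of the disk to be $O(t^\eta)$. Only the component of $r\circ g_t$ is controlled by (33), and even there $|r\circ g_t|\lesssim t^\eta$ does not by itself confine the image to a slab of normal width $t^\eta$. So the scaling underlying the entire $Q$-bookkeeping is not a consequence of the stated hypotheses. These are not finishing details; they are exactly where the content of the theorem lives, and Catlin's weighted construction in [C1] is designed to produce the correct exponent where the bump-and-phase ansatz cannot. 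As written, the proposal is an informed sketch that identifies its own gap rather than a proof.
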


In two dimensions, type of a point is an upper semi-continuous function: if the type at $p$ is $t$, then the type is at most $t$ nearby.
In higher dimensions the type of a nearby point can be larger (as well as the same or smaller). 
Sharp local bounds for the type indicate why relating the supremum of possible values
of $\epsilon$ in a subelliptic estimate to the type is difficult in dimension at least $3$.

\begin{theorem} See [D2]. Let $b\Omega$ in ${\mathbb C}^{n}$ be smooth 
and pseudoconvex near $p_0$, and assume ${\bf T}(b\Omega,p_0)$ is finite. Then there
is a neighborhood of $p_0$ on which

$$ {\bf T}(b\Omega,p) \le {{\bf T}(b\Omega,p_0)^{n-1} \over 2^{n-2}}. \eqno (39) $$ 
\end{theorem}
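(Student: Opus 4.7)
The plan is to translate the geometric estimate on the real-hypersurface type into an algebraic estimate on a family of ideals of holomorphic germs, and then combine a sharp inequality of D'Angelo with the upper semi-continuity of the intersection number. After a change of coordinates I may assume that near $p_0$ the defining function takes the form
$$ r(z) = 2\,\mathrm{Re}(z_n) + F(z_1,\ldots,z_{n-1},\mathrm{Im}(z_n)) $$
with $dF(0)=0$. Following [D1], to each boundary point $p$ close to $p_0$ I associate an $\mathbf{m}$-primary ideal $I_p$ of holomorphic germs in $n-1$ variables, built from the holomorphic components obtained by expanding the defining function around $p$, so that the order of contact $\mathbf{T}(b\Omega,p)$ is controlled by $\mathbf{T}(I_p)$.

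I would then chain three inequalities. First, for any $\mathbf{m}$-primary ideal one has the elementary bound $\mathbf{T}(I) \le \mathbf{D}(I)$, since a curve realizing the maximal order of contact yields a chain in $\mathcal{O}/I$ of length at least $\mathbf{T}(I)$, which is then bounded by the vector-space dimension $\mathbf{D}(I) = \dim_{\mathbb{C}} \mathcal{O}/I$. Second, the assignment $p \mapsto \mathbf{D}(I_p)$ is upper semi-continuous in $p$: a basis of $\mathcal{O}/I_{p_0}$ lifts by Nakayama's lemma and the coherence of the family $\{I_p\}$ to a spanning set of $\mathcal{O}/I_p$ for $p$ near $p_0$, hence $\mathbf{D}(I_p) \le \mathbf{D}(I_{p_0})$. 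Third, the sharp D'Angelo inequality of [D2] in $n-1$ variables,
$$ \mathbf{D}(I) \le \frac{\mathbf{T}(I)^{n-1}}{2^{n-2}}, $$
applied at the base point $p_0$. Chaining these yields
$$ \mathbf{T}(b\Omega,p) \le \mathbf{T}(I_p) \le \mathbf{D}(I_p) \le \mathbf{D}(I_{p_0}) \le \frac{\mathbf{T}(I_{p_0})^{n-1}}{2^{n-2}} \le \frac{\mathbf{T}(b\Omega,p_0)^{n-1}}{2^{n-2}}, $$
which is the desired estimate. The sharpness illustrated by Example 7.1 shows that this chain cannot be improved in general: semi-continuity of $\mathbf{D}$ is exact, while $\mathbf{T}$ can genuinely jump up to the value $\mathbf{D}$ allows.

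The main obstacle is the third inequality, the sharp bound $\mathbf{D}(I) \le \mathbf{T}(I)^{n-1}/2^{n-2}$, which is the technical heart of [D2]. Its proof proceeds by induction on the number of variables: one projects $I$ generically onto a coordinate hyperplane, controls the behavior of $\mathbf{T}$ along the generic Segre direction via a polar-component decomposition, and uses a Bezout-style accounting to show that each additional variable at worst doubles the ratio $\mathbf{D}/\mathbf{T}$, producing the exponential factor $2^{n-2}$. A secondary but nontrivial point is the construction of the family $p \mapsto I_p$ for a general pseudoconvex boundary (beyond the special form of Proposition 4.3) and the verification that it respects both the order-of-contact identity and the semi-continuity of the intersection number; this is handled in [D1] by a careful truncation of the Taylor expansion of the defining function at the variable base point.
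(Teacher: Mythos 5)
The overall architecture you describe does match the route in [D2]: reduce the hypersurface order of contact to an algebraic invariant of a family of $\mathbf{m}$-primary ideals in $n-1$ variables, use upper semi-continuity of the codimension $\mathbf{D}(I_p)$, and then a Bezout-type bound relating $\mathbf{D}$ and $\mathbf{T}$. But the quantitative bookkeeping is wrong, and the error is not cosmetic.

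The key false step is the ``sharp D'Angelo inequality'' you invoke, $\mathbf{D}(I) \le \mathbf{T}(I)^{n-1}/2^{n-2}$ for $\mathbf{m}$-primary ideals in $n-1$ variables. Take $n=3$ and $I = (z_1,z_2) = \mathbf{m}$ in $\mathcal{O}_2$: then $\mathbf{T}(I)=\mathbf{D}(I)=1$ and your inequality reads $1 \le 1/2$. More generally, for $I=(z_1^a,z_2^a)$ one has $\mathbf{T}(I)=a$ and $\mathbf{D}(I)=a^2$, again contradicting it. The correct ideal inequality, which is what [D2] proves, is simply $\mathbf{D}(I) \le \mathbf{T}(I)^{n-1}$: a generic $(n-1)$-tuple of elements of $I$ of vanishing order at most $\mathbf{T}(I)$ forms a regular sequence whose multiplicity bounds $\mathbf{D}(I)$ from above. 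The $2^{n-2}$ in the theorem does \emph{not} come from the ideal side at all; it comes from the factor-of-$2$ conversion between the order of contact with the real hypersurface and the order of contact with the associated holomorphic ideal. Writing $r = 2\,\mathrm{Re}(z_n) + \|f(z')\|^2$, a curve in $\{z_n=0\}$ pulls $r$ back to a sum of $|f_j\circ\gamma|^2$, which vanishes to \emph{twice} the holomorphic order, so $\mathbf{T}(b\Omega,p) = 2\,\mathbf{T}(I_p)$. Your first link $\mathbf{T}(b\Omega,p) \le \mathbf{T}(I_p)$ therefore also has the factor of $2$ on the wrong side. The chain that actually yields the theorem is
$$ \mathbf{T}(b\Omega,p) = 2\,\mathbf{T}(I_p) \le 2\,\mathbf{D}(I_p) \le 2\,\mathbf{D}(I_{p_0}) \le 2\,\mathbf{T}(I_{p_0})^{n-1} = 2\left(\tfrac{\mathbf{T}(b\Omega,p_0)}{2}\right)^{n-1} = \frac{\mathbf{T}(b\Omega,p_0)^{n-1}}{2^{n-2}}, $$
with the $2^{n-2}$ produced entirely by the two hypersurface-to-ideal conversions, not by a sharpened Bezout bound. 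Your heuristic that ``each additional variable at worst doubles the ratio $\mathbf{D}/\mathbf{T}$'' is therefore not what is happening; the exponential factor is bookkeeping of the squares in $|f_j|^2$, and the induction-on-variables argument you sketch is aimed at the wrong inequality.

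A secondary caution: the blanket statement $\mathbf{T}(I)\le\mathbf{D}(I)$ and the semi-continuity of $\mathbf{D}(I_p)$ are both used in [D2], but establishing semi-continuity for the ideals attached to a general smooth pseudoconvex boundary (not just those of the special form in Proposition~4.3) is nontrivial; it requires working with the D'Angelo decomposition $r = 2\,\mathrm{Re}(h) + \|f\|^2 - \|g\|^2$ and taking a generic member of a unitary family of ideals $I(U,p)=(h, f - Ug)$, since the naive $\mathbf{D}$ need not be finite or semi-continuous. Your phrase ``careful truncation of the Taylor expansion'' does not capture this; the generic-unitary device is essential and is where pseudoconvexity enters.
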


The bound (39) is sharp. When $n=2$ we see that the type at a nearby point can be no larger
than the type at $p_0$. When $n\ge 3$, however, the type can be larger nearby. This failure of upper semi-continuity
of the type shows that the best epsilon in a subelliptic estimate cannot simply be the reciprocal
of the type, as holds in two dimensions. See [D1], [D2], [D3] for more information. Example 7.1 below generalizes Example 6.1.
It is an unpublished result due to the first author.

All these examples are based upon a simple example found by the second author in [D3] to illustrate
the failure of upper semi-continuity of order of contact. See [D1] and [D2] for extensions to higher dimensions
and a proof of (39).

\section{sharp subelliptic estimates}

\begin{example} Consider the local defining function $r$ given by

$$ r(z) = 2{\rm Re}(z_3) + |z_1^{m_1} - f(z_3)z_2|^2 + |z_2^{m_2}|^2
+ |z_2 g(z_3)|^2, \eqno (40) $$
where $m_1$ and $m_2$ are integers at least $2$ and $f$ and $g$ are functions to be chosen. Let $b\Omega$ be the zero set of $r$.
Assume $f(0)=g(0)=0$. It follows by [D1] that
${\bf T}(b\Omega,0)= 2{\rm max}(m_1,m_2)$ and ${\rm mult}(b\Omega,0) = 2 m_1 m_2$.
We will show that we can obtain, for the reciprocal of the largest possible value
of $\epsilon$ in a subelliptic estimate, any value in between these two numbers.

By Theorem 6.1 there is a subelliptic estimate. According to Theorem 6.2, 
to find an upper bound for $\epsilon$  we must find a family $\{M_t\}$ of one-dimensional
complex curves with certain properties. We follow Example 6.1 and define this family $\{M_t\}$ as follows:
$M_t$ is the image of the holomorphic curve

$$ \gamma_t(\zeta) = (\zeta, {\zeta^{m_1} \over f(it)}, it) \eqno (41) $$
on the set where $|\zeta| \le t$.

Pulling back $r$ to this family of curves yields

$$r(\gamma_t(\zeta)) = |{\zeta^{m_1} \over f(it)}|^{2m_2} +  
|{\zeta^{m_1} \over f(it)}|^2 |g(it)|^2. \eqno (42) $$

Reasoning as in Example 6.1, we choose $f$ and $g$ to make the two terms in (42) equal. 
The condition for equality is

$$ |\zeta|^{2m_1m_2 - 2m_1} = |f|^{2m_2-2} |g|^2. \eqno (43) $$

The crucial difference now is that the functions $f$ and $g$, which depend on only one variable, can be chosen as we wish.
In particular, choose a parameter $\lambda \in (0,1]$, and assume that $f$ and $g$ are chosen such that
${\rm log}(|f|) = \lambda {\rm log}(|g|)$. Then (43) gives

$$ (2m_1(m_2-1)){\rm log}(|\zeta|) = (2(m_2-1)\lambda + 2) {\rm log}(|g|). \eqno (44) $$
We obtain from (44) and (45) 

$$ {\rm log} ({|r(\gamma_t(\zeta))| \over 2}) = \left(2m_1 + { (2 - 2 \lambda)2m_1(m_2-1) \over 2(m_2-1)\lambda + 2)}\right)  {\rm log}(|\zeta|). \eqno (45) $$

In order to find a value $\eta$ for which there is a constant $C$ such that $|r|\le C |\zeta|^\eta$, we take logs and see that
we find the ratio ${{\rm log}(|r|) \over {\rm log}(|\zeta|)}$. Using (45) we obtain
the order of contact ${\bf T}$ of this family of curves to be

$$ {\bf T} = 2m_1 + {2(1 -\lambda)m_1 (m_2 -1) \over (m_2-1)\lambda + 1 }. \eqno (46) $$

If in (46) we put $\lambda=1$ then we get ${\bf T} = 2m_1$.
If in (46) we let $\lambda$ tend to $0$, we obtain ${\bf T} = 2m_1+2m_1(m_2-1) = 2m_1m_2$.
\end{example}

In the previous example we may, for example, choose $f(z)= z^p$ and $g(z)=z^q$. If we put $\lambda = {p \over q}$,
then our calculations apply, and (46) is rational. On the other hand, we can achieve the condition ${\rm log}(|f|) = \lambda {\rm log}(|g|)$
by allowing $f$ and $g$ to be functions vanishing to infinite order at $0$ but which are holomorphic in the half plane
${\rm Re}(z_3) < 0$. For example we may define $f$ by
$f(\zeta)= {\rm exp}({-p \over \sqrt{-\zeta}})$ and $g$ the same except that $p$ is replaced by $q$.
By doing so we can allow $\lambda$ in (46) to be real. It is easy to include the limiting value $\lambda=0$, by setting $g=0$.

In order to finish we have to discuss sufficiency. The first author uses the method of weighted $L^2$ estimates.
We let $H(\Phi)$ denote the complex Hessian of a smooth real-valued function $\Phi$. We say that
$H(\Phi) \ge C$ if the minimum eigenvalue of the Hessian is at least $C$ at each point.
One of the crucial steps in the proof of Theorem 6.1 is the following result from [C2],
based upon ideas from [C4]. 

\begin{theorem} Let $\Omega$ be a smoothly bounded domain, defined near a boundary point $p$ by $\{r=0\}$. Suppose
that there is neighborhood $U$ of $p$ such that the following holds:
For each $\delta > 0$, we can find a smooth function $\Phi_\delta$ satisfying

1) $|\Phi_\delta| \le 1$ on $U$. Thus $\Phi_\delta$ is uniformly bounded.

2) $\Phi_\delta$ is plurisubharmonic on $U$. Thus $H(\Phi_\delta) \ge 0$ on $U$.

3) $H(\Phi_\delta) \ge c \delta^{-2 \epsilon}$ on $U \cap \{-\delta < r \le 0\}$. Thus the Hessian of $\Phi_\delta$ blows up
in a precise manner as we approach the boundary.

Then there is a subelliptic estimate of order $\epsilon$ at $p$. \end{theorem}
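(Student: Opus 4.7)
The plan is to combine the weighted $L^2$ method of Morrey--Kohn--H\"ormander with a dyadic Littlewood--Paley decomposition in the tangential directions, applying the family $\{\Phi_\delta\}$ scale-by-scale. Begin with the basic weighted identity for $(0,1)$-forms $\phi$ compactly supported in $U$ and in the domain of $\bar\partial^*$: for any smooth weight $\Phi$,
$$\int_\Omega \sum_{j,k} \Phi_{z_j \bar z_k}\phi_j\bar\phi_k\, e^{-\Phi}\, dV + \int_{b\Omega}\sum_{j,k} r_{z_j \bar z_k}\phi_j\bar\phi_k\, e^{-\Phi}\, dS \le C\bigl(\|\bar\partial\phi\|^2 + \|\bar\partial^*_\Phi\phi\|^2\bigr).$$
By hypothesis 1), $e^{-\Phi_\delta}$ is comparable to $1$ uniformly in $\delta$; pseudoconvexity makes the boundary integrand non-negative on the domain of $\bar\partial^*$; and by hypotheses 2) and 3) the Hessian term is pointwise $\ge c\,\delta^{-2\epsilon}|\phi|^2$ on the strip $S_\delta = U\cap\{-\delta < r\le 0\}$. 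Dropping the boundary term and restricting the volume integral to $S_\delta$ yields the scale-by-scale inequality
$$\delta^{-2\epsilon}\int_{S_\delta}|\phi|^2\, dV \le C\,Q(\phi,\phi). \qquad (*)$$

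Next, to convert $(*)$ into the Sobolev estimate (4), I would flatten the boundary near $p$ and perform a Littlewood--Paley decomposition $\phi = \sum_k \phi^{(k)}$ in the tangential variables, with $\phi^{(k)} = \Psi_k(D_t)\phi$ for a dyadic cutoff at tangential frequency $|\tau|\sim 2^k$. For each $k$, I would apply $(*)$ at scale $\delta_k = 2^{-k}$ to $\phi^{(k)}$. The key observation is that a tangentially frequency-localized form at scale $2^k$ whose $Q$-form is controlled must concentrate its $L^2$ mass in the boundary layer $S_{\delta_k}$ of thickness $2^{-k}$: interior ellipticity of $\bar\partial\oplus\bar\partial^*$, together with the uncertainty principle in the normal direction, gives $\|\phi^{(k)}\|_{L^2(U)}^2 \lesssim \|\phi^{(k)}\|_{L^2(S_{\delta_k})}^2 + \text{errors}$. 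Substituting into $(*)$ then produces $2^{2k\epsilon}\|\phi^{(k)}\|^2 \le C\,Q(\phi^{(k)},\phi^{(k)})$. Summing yields the tangential subelliptic estimate
$$\|\phi\|_{\epsilon,\mathrm{tan}}^2 = \sum_k 2^{2k\epsilon}\|\phi^{(k)}\|^2 \le C\,Q(\phi,\phi),$$
after absorbing commutator terms over nearby frequency bands. The passage from the tangential $\epsilon$-norm to the full Sobolev $\epsilon$-norm is standard: using $\bar\partial\phi$ and $\bar\partial^*\phi$, a normal derivative of $\phi$ is expressible in terms of tangential derivatives modulo $Q(\phi,\phi)$.

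The main obstacle is the frequency-localization step. The tangential cutoff $\Psi_k(D_t)$ does not preserve the boundary condition $\sum\phi_j r_{z_j} = 0$ on $b\Omega$, so $\phi^{(k)}$ generally falls outside the domain of $\bar\partial^*$, and the commutator $[\Psi_k(D_t),\bar\partial^*]$ contributes genuine boundary error terms that must be fed back into $Q$ at an acceptable cost. In addition, converting the concentration statement ``most of $\|\phi^{(k)}\|^2$ lives in $S_{\delta_k}$'' into a rigorous bound requires a careful decomposition of the normal variable, balancing interior elliptic gain against the boundary-strip thickness. Finally, because the weights $\Phi_\delta$ are constrained only by the bounds in 1)--3) and may oscillate rapidly as $\delta\to 0$, no estimate on higher derivatives of $\Phi_\delta$ is available; all error terms must be controlled using only these hypotheses, which is the principal technical content of Catlin's argument in [C2].
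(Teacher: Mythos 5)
The paper does not prove this statement; it only cites [C2] (Catlin, \emph{Boundary invariants of pseudoconvex domains}), so the comparison must be against Catlin's original argument. Your two-step structure --- a weighted basic estimate giving a strip inequality $\delta^{-2\epsilon}\int_{S_\delta}|\phi|^2\le C\,Q(\phi,\phi)$, followed by a tangential dyadic decomposition and a concentration lemma to upgrade this to a tangential Sobolev bound, and finally the standard passage from tangential to full $\epsilon$-norm --- does match the strategy in [C2]. The concentration step is correctly identified: tangential localization at frequency $2^k$ plus interior ellipticity of $\bar\partial\oplus\bar\partial^*$ forces the $L^2$ mass into the strip of thickness $2^{-k}$ up to a term $2^{-2k}Q(\phi^{(k)},\phi^{(k)})$, and since $\epsilon\le\tfrac12<1$ the interior contribution is harmless; the issue that tangential cutoffs must respect the boundary condition is real but is handled by working in special boundary coordinates where membership in $\mathrm{Dom}(\bar\partial^*)$ reduces to $\phi_n=0$ on $b\Omega$, a condition preserved by tangential Fourier multipliers.

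There is, however, a genuine gap at the very first step, and it is not merely a ``technical detail to be filled in.'' The weighted Morrey--Kohn--H\"ormander identity that you invoke has $\|\bar\partial^*_{\Phi_\delta}\phi\|^2$ on the right, whereas the target $Q(\phi,\phi)$ involves the \emph{unweighted} $\bar\partial^*$. Passing between them introduces the term $\sum_j(\Phi_\delta)_{z_j}\phi_j$, and hypotheses 1)--3) give no control whatsoever on $\nabla\Phi_\delta$; indeed, since $\Phi_\delta$ is bounded but has Hessian of size $\delta^{-2\epsilon}$ near the boundary, its gradient is expected to be large and $\delta$-dependent. As written, the deduction of $(*)$ from the displayed identity is therefore invalid. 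The remedy --- which is a substantive point of Catlin's proof, not a footnote --- is to replace $\Phi_\delta$ by a modified weight before applying the identity: e.g., use $e^{\Phi_\delta}$, whose complex Hessian acquires the extra nonnegative term $e^{\Phi_\delta}\,\partial\Phi_\delta\otimes\bar\partial\Phi_\delta$, so that $|\sum_j(\Phi_\delta)_{z_j}\phi_j|^2$ is pointwise dominated by the new Hessian while the sup-bound $|e^{\Phi_\delta}|\le e$ is retained; equivalently one may use a ``twisted'' basic identity in which the gradient contribution is built into the Hessian. You flag ``no estimate on higher derivatives of $\Phi_\delta$'' as a difficulty, but you do not supply the mechanism that resolves it, and without that mechanism the inequality $(*)$ --- on which everything downstream depends --- is not established.
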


Using this result it is possible to say more about Example 7.1. One can choose $f$ and $g$ there such that
there is a subelliptic estimate of order $\epsilon$ at the origin, where $\epsilon$ is the reciprocal of the number ${\bf T}$ in (46). 
In particular, for every $\epsilon_0$ in the range $[{1 \over 2m_1m2}, {1 \over 2m_1}]$ there is a domain in $\mathbb{C}^{3}$
such that the largest possible value of $\epsilon$ in a subelliptic estimate is $\epsilon_0$. By changing the function $g$
appropriately, one can create the situation of part 2) of the next result.

\begin{theorem} Let $\epsilon_0$ be in the interval $(0, {1 \over 4}]$.

1) There is a smooth pseudoconvex domain in $\mathbb{C}^{3}$, with defining function (40), such that the subelliptic estimate (4) holds
with $\epsilon$ equal to $\epsilon_0$, but for no larger value of $\epsilon$.
In addition, if $\epsilon_0$ (in the same range)
is rational, then we can choose the domain to be defined by (40), where
$f(z)=z^p$ and $g(z)=z^q$, and hence the defining equation is a polynomial.

2) There is also a smooth pseudoconvex domain in $\mathbb{C}^{3}$, with defining equation (40), such that the
estimate (4) holds for all $\epsilon$ with $0 \le \epsilon < \epsilon_0$, but for
which the estimate fails at $\epsilon_0$.
\end{theorem}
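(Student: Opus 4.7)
The plan is to deduce both parts from the flexible family (40) studied in Example 7.1, combining the necessary condition of Theorem 6.2 with the weighted $L^2$ sufficient condition of Theorem 7.1 and Catlin's construction of plurisubharmonic weights from [C2]. Example 7.1 already produces, for each admissible choice of $(m_1,m_2,f,g)$, a family of regular holomorphic curves $\gamma_t$ whose order of contact with $b\Omega$ is the explicit quantity $\mathbf{T}$ of (46), controlled by the parameter $\lambda = \log|f|/\log|g|$. The content of Theorem 7.2 is that varying $\lambda$ together with the exponents $(m_1,m_2)$ covers every $\epsilon_0 \in (0,1/4]$ for part 1, and that an appropriate perturbation of $g$ yields the corresponding un-attained case for part 2.

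For part 1, observe that the right-hand side of (46) is continuous and strictly decreasing in $\lambda \in [0,1]$, running from $2m_1 m_2$ at $\lambda = 0$ to $2m_1$ at $\lambda = 1$. Given $\epsilon_0 \in (0,1/4]$ set $\eta_0 = 1/\epsilon_0 \ge 4$, and choose integers $m_1, m_2 \ge 2$ satisfying $2m_1 \le \eta_0 \le 2m_1 m_2$; the choice $m_1 = 2$, $m_2 = \max(2,\lceil \eta_0/4 \rceil)$ always works. Solving (46) gives the unique value $\lambda_0 = (2m_1 m_2 - \eta_0)/(\eta_0(m_2-1)) \in [0,1]$ with $\mathbf{T}(\lambda_0) = \eta_0$. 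If $\epsilon_0$ is rational then so is $\lambda_0 = p/q$, and $f(z_3) = z_3^p$, $g(z_3) = z_3^q$ make (40) polynomial; for irrational $\epsilon_0$ take $f(z_3) = \exp(-p/\sqrt{-z_3})$ and $g(z_3) = \exp(-q/\sqrt{-z_3})$ with $p/q = \lambda_0$, as described just before Theorem 7.1. Pseudoconvexity is immediate from the sum-of-squares form of (40). The upper bound $\epsilon \le \epsilon_0$ follows from Theorem 6.2 applied to the family (41), whose order of contact equals $\eta_0$ by direct computation. For the matching lower bound I would invoke Theorem 7.1 and construct, for each $\delta > 0$, a bounded plurisubharmonic weight $\Phi_\delta$ on $U \cap \Omega$ with $H(\Phi_\delta) \ge c\delta^{-2\epsilon_0}$ on $\{-\delta < r \le 0\}$, following the method of [C2] adapted to the sum-of-squares shape of (40).

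For part 2, I follow the hint preceding the theorem: modify $g$ so that the effective ratio $\log|f(it)|/\log|g(it)| = \lambda(t)$ is no longer the constant $\lambda_0$ but slowly decreases to $\lambda_0$ from above as $t \to 0^+$, for instance by multiplying $g$ by a holomorphic factor such as $\exp(-1/(-\log(-z_3)))$ that vanishes slightly faster. Substituting into (42) and re-optimizing exactly as in Example 7.1 yields $\sup_\zeta |r(\gamma_t(\zeta))| \sim t^{\mathbf{T}(\lambda(t))}$ with $\mathbf{T}(\lambda(t)) < \eta_0$ for each $t > 0$ but $\mathbf{T}(\lambda(t)) \to \eta_0$; thus the resulting family has order of contact exactly $\eta_0$ in the sense of (33), with the supremum in its definition \emph{not} attained. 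For every $\epsilon < \epsilon_0$ Catlin's weight construction still produces a valid $\Phi_\delta$, so (4) holds; at $\epsilon = \epsilon_0$ the perturbation introduces a slowly varying factor that prevents the Hessian lower bound from closing, so no admissible weight exists and the estimate fails. The main obstacle throughout is the Hessian construction: matching the combinatorics of the exponents $(m_1,m_2,p,q)$ to a prescribed $\epsilon_0$ is elementary, but producing the weight $\Phi_\delta$ with Hessian growth exactly $\delta^{-2\epsilon_0}$, and then tracking its precise failure mode at the critical value for part 2, is the delicate step where the machinery of [C2] and [C4] does the real work.
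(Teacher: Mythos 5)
Your proposal tracks the paper's own (informal) treatment of Theorem 7.2 very closely: the paper never writes out a self-contained proof, but instead points to Example 7.1, the curve family (41), the necessary-condition Theorem 6.2, the sufficient-condition Theorem 7.1, and a remark that $f(\zeta)=\exp(-p/\sqrt{-\zeta})$, $g(\zeta)=\exp(-q/\sqrt{-\zeta})$ with real $p,q$ (not integers, when $\lambda_0$ is irrational) yield arbitrary real $\lambda\in(0,1]$. Your algebra for $\lambda_0=(2m_1 m_2-\eta_0)/(\eta_0(m_2-1))$ and for the choice $m_1=2$, $m_2=\max(2,\lceil\eta_0/4\rceil)$ is correct, and your honest identification of the plurisubharmonic weight construction as the place where the real work of [C2], [C4] is hidden is exactly where the paper defers as well. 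So for part 1 you are on firm ground modulo the deferred sufficiency input, which is precisely what the paper also defers.

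There is a genuine logical gap in your argument for part 2, however, and it is worth isolating. You want the estimate to hold for every $\epsilon<\epsilon_0$ but to fail at $\epsilon_0$ itself. You produce a family with order of contact exactly $\eta_0=1/\epsilon_0$ (the supremum in (33) not attained), and then argue failure at $\epsilon_0$ by saying that "no admissible weight exists and the estimate fails." But Theorem 7.1 is a \emph{sufficient} condition: non-existence of the weights $\Phi_\delta$ does not by itself show that (4) fails. Moreover Theorem 6.2, as stated, gives only the implication that if (4) holds at $\epsilon_0$ then every family has order of contact at most $1/\epsilon_0$; a family with order of contact \emph{exactly} $1/\epsilon_0$ is consistent with the estimate holding at $\epsilon_0$, so Theorem 6.2 alone does not close the argument either. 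To actually rule out the estimate at $\epsilon_0$ you need a sharper necessity statement than the one quoted in the paper --- the full strength of the argument in [C1], where the failure of the estimate is extracted from quantitative lower bounds on $\sup_\zeta|r(g_t(\zeta))|$ rather than merely from the supremum of admissible $\eta$. Put differently: the supremum $\eta_0$ in (33) not being attained is suggestive of, but not equivalent to, the supremum of admissible $\epsilon$ in (4) not being attained, and the bridge between them is exactly the content you would need to supply. Since the paper itself only gestures at part 2 ("by changing the function $g$ appropriately, one can create the situation..."), your proposal is at the same level of rigor as the source, but you should not present the implication "no weight $\Rightarrow$ estimate fails" as if it were a theorem quoted in the paper.
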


Theorem 7.2 can be extended to higher dimensions. It is much harder
to understand subelliptic estimates on $(0,1)$ forms in three or more dimensions than it is in two dimensions.
The theory for $(0,1)$ forms in two dimensions is analogous to the theory for $(0,n-1)$ forms in $n$ dimensions.
In these cases there is no need to consider the contact with singular varieties, and hence issues involving subelliptic
estimates are controlled by commutators. We conclude by observing that connections between the analysis and the
commutative algebra involved do not reveal themselves in two dimensions, or more generally, when
we consider estimates on $(0,n-1)$ forms. Hence Theorem 3.1 tells only a small part of the full story.

\section*{Bibliography}

[BS] Boas, Harold P. and Straube, Emil J,  Global regularity of the $\overline\partial$-Neumann problem: a survey of the $L^2$-Sobolev theory, 
pages 79-111 in Several Complex Variables, (M. Schneider and Y. T. Siu, eds.), Math. Sci. Res. Inst. Publ. 37, Cambridge Univ. Press, Cambridge, 1999.

[C1] Catlin, D., Necessary conditions for subellipticity
of the $\overline{ \partial }$-Neumann problem, Annals of Math 117(1983), 147-171.

[C2] \underbar{\qquad}, Boundary invariants of pseudoconvex domains, 
Annals of Math. 120(1984), 529-586.

[C3] \underbar{\qquad}, 
Subelliptic estimates for the $\overline{\partial }$-Neumann
problem on pseudoconvex domains, Annals of Math 126(1987), 131-191.

[C4] \underbar{\qquad}, Global regularity for the ${\overline \partial}$-Neumann
problem, pages 39-49 in Complex Analysis of Several Variables (Yum-Tong Siu ed.),
Proc. Symp. Pure Math 41, Amer. Math Soc., Providence, RI, 1984.

[CC] Catlin, D. and Cho, Jae-Seong, Estimates for the ${\overline \partial}$-Neumann problem on regular coordinate domains, preprint.

[CNS] Chang, D. C., Nagel, A., and Stein, E. M., Estimates for the ${\overline
\partial}$-Neumann problem in pseudoconvex domains of finite type in $\mathbb{C}^{2}$.
Acta Math 169(1992), 153-228.

[CS] Chen, So-Chin and Shaw, Mei-chi, Partial differential equations in complex
analysis, Amer. Math Soc./International Press, 2001.

[Cho] Cho, Jae-Seong, An algebraic version of subelliptic multipliers, Michigan Math J., 
Vol. 54  (2006), 411-426. 

[D1] D'Angelo, J. P., Several Complex Variables and the Geometry of Real 
Hypersurfaces, CRC Press, Boca Raton, 1992. 

[D2] \underbar{\qquad},  Real hypersurfaces, orders of contact, and applications, Annals of Math 115(1982), 615-637.

[D3] \underbar{\qquad}, Subelliptic estimates and failure of semi-continuity
for orders of contact, Duke Math. J., Vol. 47(1980), 955-957.

[D4] \underbar{\qquad}, Finite type conditions and subelliptic estimates, Pp. 63-78
in Modern Methods in Complex Analysis, Annals of Math Studies 137, Princeton Univ.
Press, Princeton, 1995.

[D5] \underbar{\qquad} Real and complex geometry meet the Cauchy-Riemann equations, Park City Math Institute Notes, 2008. (to appear)

[DK] D'Angelo, J. and Kohn, J. J., Subelliptic estimates and finite type,
pages 199-232 in Several Complex Variables (M. Schneider and Y. T. Siu, eds.),
Math Sci. Res. Inst. Publ. 37, Cambridge Univ. Press, 1999.

[DF1] Diederich K., and Fornaess, J.E., Pseudoconvex domains with real analytic boundary, 
Annals of Math (2) 107(1978), 371-384.

[FK] Folland, G. B., and  Kohn, J. J., The Neumann problem for the
Cauchy-Riemann complex, {\it Annals of Math. Studies}, 75, Princeton
University Press, 1972.

[Gr] Greiner, P., Subellipticity estimates
of the $\overline{\partial }$-Neumann problem, J. Differential Geometry 9 (1974), 239-260.

[He] Herbig, Anne-Katrin, A sufficient condition for subellipticity of the $\overline\partial$-Neumann operator.  
J. Funct. Anal.  242  (2007),  no. 2, 337--362.

[K1] Kohn, J. J. Harmonic integrals on strongly pseudo-convex manifolds, I,
{\it Annals of Mathematics}, 78, 1963, 112-148.

[K2] \underbar {\qquad} Harmonic integrals on strongly pseudo-convex manifolds, II,
{\it Annals of Mathematics}, 79, 1964, 450-472.

[K3] \underbar {\qquad} Boundary behavior of $\bar{\partial}$ on weakly pseudo-convex
manifolds of dimension two, {\it Journal of Differential Geometry}, 6,
1972, 523-542.

[K4] \underbar {\qquad} Subellipticity of the $\bar{\partial}$-Neumann problem on
pseudo-convex domains: sufficient conditions, {\it Acta Mathematica},
142, March 1979.

[K5] \underbar {\qquad} A Survey of the $\bar{\partial}$-Neumann Problem, {\it Proc. of
Symposia in Pure Math.}, Amer. Math. Soc. 41, 1984, 137-145.

[KN] Kohn, J. J. and Nirenberg, L. Non-coercive boundary value problems, {\it
Comm. Pure Appl. Math.}, 18, 1965, 443-492.

[KZ] Khanh, Tran Vu and Zampieri, G., Precise subelliptic estimates for a class of special domains, (preprint).

[RS] Rothschild, Linda Preiss and Stein, E. M., Hypoelliptic differential operators and nilpotent groups, Acta Math. 137 (1976), 247-320.

[S] Straube, Emil, Plurisubharmonic functions and
subellipticity of the $\overline\partial$-Neumann problem on non-smooth
domains, Math. Res. Lett. 4  (1997), 459-467.

\end{document}